\newtheorem{theorem}[]{Theorem}[section]
\numberwithin{equation}{section}
\newtheorem{definition}[theorem]{Definition}
\newtheorem{lemma}[theorem]{Lemma}
\newtheorem{remark}[theorem]{Remark}
\newtheorem{proposition}[theorem]{Proposition}
\DeclarePairedDelimiter\abs{\lvert}{\rvert}
\DeclarePairedDelimiter\norm{\lVert}{\rVert}
\DeclarePairedDelimiterX{\inner}[2]{\langle}{\rangle}{#1, #2}
\newcommand{\R}{\mathbb{R}}
\newcommand{\C}{\mathbb{C}}
\newcommand{\N}{\mathbb{N}}
\newcommand{\Rn}{\mathbb{R}^N}
\newcommand{\Lp}[1]{\mathit{L}^{#1}(\Rn)}
\newcommand{\normLp}[2]{{||{#1}||}_{#2}}
\newcommand{\dist}[2]{\text{dist}_{\mathit{H}}({#1},#2)}
\newcommand{\Hor}{{\mathit H}^1}
\newcommand{\Hoc}{{\mathit H}}
\renewcommand{\(}{\left(}
\renewcommand{\)}{\right)}
\renewcommand{\[}{\left[}
\renewcommand{\]}{\right]}
\newcommand{\weakto}{\rightharpoonup}
\newcommand{\diff}{\nabla}
\newcommand{\laplace}{\Delta}
\begin{document}

\title[Orbital stability of ground states for a Sobolev critical  Schr\"odinger equation]{Orbital stability of ground states \\ for a Sobolev critical  Schr\"odinger equation}

\author[L. Jeanjean, J. Jendrej, T.T. Le and N. Visciglia]{Louis Jeanjean, Jacek Jendrej, Thanh Trung Le and Nicola Visciglia}

\address{
	\vspace{-0.25cm}
	\newline
	\textbf{{\small Louis Jeanjean}} 
	\newline \indent Laboratoire de Math\'{e}matiques (CNRS UMR 6623), Universit\'{e} de Bourgogne Franche-Comt\'{e}, Besan\c{c}on 25030, France}
\email{louis.jeanjean@univ-fcomte.fr} 

\address{
	\vspace{-0.35cm}
	\newline
	\textbf{{\small Jacek Jendrej}}
	\newline \indent CNRS and LAGA (CNRS UMR 7539), Universit\' e Sorbonne Paris Nord, Villetaneuse 93430, France}
\email{jendrej@math.univ-paris13.fr}

\address{
	\vspace{-0.25cm}
	\newline
	\textbf{{\small Thanh Trung Le }} 
	\newline \indent Laboratoire de Math\'{e}matiques (CNRS UMR 6623), Universit\'{e} de Bourgogne Franche-Comt\'{e}, Besan\c{c}on 25030, France}
\email{thanh\_trung.le@univ-fcomte.fr}

\address{
	\vspace{-0.35cm}
	\newline
	\textbf{{\small Nicola Visciglia }}
	\newline \indent Dipartimento di Matematica, Universit\`a Degli Studi di Pisa, Largo Bruno Pontecorvo, 5, 56127, Pisa, Italy}
\email{nicola.visciglia@unipi.it}

\thanks{J. Jendrej is supported by ANR-18-CE40-0028 project ESSED and Chilean projects FONDECYT 1170164 and France-Chile ECOS-Sud C18E06 project. N.V. is supported  by PRIN grant 2020XB3EFL and  by the Gruppo Nazionale per l’ Analisi Matematica, la Probabilità e le loro Applicazioni (GNAMPA) of the Istituzione Nazionale di Alta Matematica (INDAM)}

\date{}
\subjclass[2010]{}
\keywords{}
\maketitle

\centerline {\em Dedicated to the memory of Professor Jean Ginibre}
\begin{abstract} 
	We study the existence of ground state standing waves, of prescribed mass, for the nonlinear Schr\"{o}dinger equation with mixed power nonlinearities
	\begin{align*}
	i \partial_t v + \Delta v + \mu v |v|^{q-2} + v |v|^{2^* - 2} = 0, \quad (t, x) \in \R \times \Rn, 
	\end{align*}
	where $N \geq 3$, $v: \R \times \Rn \to \C$, $\mu > 0$, $2 < q < 2 + 4/N $ and $2^* = 2N/(N-2)$ is the critical Sobolev exponent. We show that all ground states correspond to local minima of the associated Energy functional. Next, despite the fact that the nonlinearity is Sobolev critical, 
we show that the set of ground states is orbitally stable.
Our results settle a question raised by N. Soave \cite{Soave2020Sobolevcriticalcase}.
 \\
\end{abstract}

\section{Introduction}

In this paper, we study the existence and orbital stability of ground state standing waves of prescribed mass for the nonlinear Schr\"{o}dinger equation with mixed power nonlinearities
\begin{align}
i \partial_t v + \Delta v + \mu v |v|^{q-2} + v |v|^{2^* - 2} = 0, \quad (t, x) \in \R \times \Rn,  \label{NLS0}
\end{align}
where $N \geq 3$, $v: \R \times \Rn \to \C$, $\mu > 0$, $2 < q < 2 + \dfrac{4}{N}$ and $2^* = \dfrac{2N}{N-2}$. \\
The nonlinear Schr\"{o}dinger equation (NLS) with pure and mixed power nonlinearities has attracted much attention in the last decades. The local existence result for the pure power energy critical NLS
has been established in \cite{CazenaveWeissler1990}. The corresponding global existence and scattering
for defocusing quintic NLS in dimension $N=3$ 
has been established
in the papers \cite{Bourgain1999, CollianderKeelStaffilaniTakaokaTao2008} respectively 
in the radial and non-radial case.
We also quote the concentration-compactness/rigidity approach introduced
in \cite{KenigMerle2006} in order to study global existence and scattering in the focusing energy critical NLS
below the ground state.
Concerning the case of NLS with mixed nonlinearities let us quote 
\cite{TaoVisanZhang07, AkahoriIbrahimKikuchiNawa2012,AkahoriIbrahimKikuchiNawa2013,ChenMiaoZhao2016, ColesGustafson20,LewinRotaNodari2020,MiaoXuZhao2013,MiaoZhaoZheng2017}.

We recall that standing waves to \eqref{NLS0} are solutions of the form $v(t,x) = e^{-i\lambda t}u(x), \lambda \in \R$. Then the function $u(x)$ satisfies the equation
\begin{align}
-\laplace u - \lambda u - \mu \abs{u}^{q-2} u - \abs{u}^{2^*-2} u = 0 \quad \mbox{in } \Rn. \label{eqn:Laplace}
\end{align}

When looking for solutions to \eqref{eqn:Laplace} a possible choice is to consider $\lambda \in \R$ fixed and to search for solutions as critical points of the action functional
$$\mathcal{A}_{\lambda, \mu}(u) :=  \dfrac{1}{2} \normLp{\diff u}{2}^2 - \dfrac{\lambda}{2} \normLp{u}{2}^2    -
\dfrac{\mu}{q} \normLp{u}{q}^q - \dfrac{1}{2^*}\normLp{u}{2^*}^{2^*}.$$
In this case one usually focuses on the existence of minimal action solutions, namely of solutions minimizing $\mathcal{A}_{\lambda, \mu}$ among all non-trivial solutions. In that direction, we refer to \cite{AlvesSoutoMontenegro2012} where, relying on the pioneering work of Brezis-Nirenberg \cite{BrezisNirenberg1983}, the existence of positive real solutions for equations of the type of \eqref{eqn:Laplace} is  addressed in a very general setting; to \cite{AkahoriIbrahimKikuchiNawa2012,AkahoriIbrahimKikuchiNawa2013} which concerns the case where $q > 2 + 4/N$ and $\mu >0$; to \cite{ChenMiaoZhao2016,MiaoXuZhao2013} where the fixed $\lambda \in \R$ problem is analyzed for $q = 2 + 4/N$ and $\mu <0$; 
see also \cite{LewinRotaNodari2020} and the reference therein. \medskip

Alternatively, one can search for solutions to \eqref{eqn:Laplace} having a prescribed $\mathit{L}^2$-norm. Defining on $H:= H^1(\R^N, \C)$ the Energy functional
\begin{equation*}
F_{\mu}(u) := \dfrac{1}{2} \normLp{\diff u}{2}^2 - \dfrac{\mu}{q} \normLp{u}{q}^q - \dfrac{1}{2^*}\normLp{u}{2^*}^{2^*}
\end{equation*}
it is standard to check that $F_{\mu}$ is of class $C^1$ and that a critical point of $F_{\mu}$ restricted to the (mass) constraint
\begin{equation*}
S(c) := \{u \in H: \normLp{u}{2}^2 = c\}
\end{equation*}
gives rise to a solution to \eqref{eqn:Laplace}, satisfying $\normLp{u}{2}^2 = c.$

In this approach the parameter $\lambda \in \R$ arises as a Lagrange multiplier. In particular, $\lambda \in \R$ does depend on the solution and is not a priori given.  This approach, that we shall follow here, is relevant from the physical point of view, in particular, since the $L^2$ norm is a preserved quantity of the evolution and  since the variational characterization of such solutions is often a strong help to analyze their orbital stability, see for example, \cite{BellazziniJeanjeanLuo2013,CazenaveLions1982,Soave2020,Soave2020Sobolevcriticalcase}. \medskip

We shall focus on the existence of ground state solutions. 

\begin{definition}
	We say that  $u_c \in S(c)$ is a ground state solution to \eqref{eqn:Laplace} if it is a solution having minimal Energy among all the solutions which belong to $S(c)$. Namely, if
	$$\quad F_{\mu}(u_c) =  \displaystyle \inf \big\{F_{\mu}(u), u \in S(c), \big(F_\mu\big|_{S(c)}\big)'(u) = 0 \big\}.$$
\end{definition}
Note that this definition keeps a meaning even in situations where the Energy $F_{\mu}$ is unbounded from below on $S(c)$. Implicit in \cite{JEANJEAN1997}, this definition was formally introduced, on a related model, in 
\cite{BellazziniJeanjean2016} and is now becoming standard. \medskip

It is well-known that the study of problems with mixed nonlinearities and the type of results one can expect, depend on the behavior of the nonlinearities  at infinity, namely on the value of the various power exponents. In particular, this behavior determines whether the functionnal  is bounded from below on $S(c)$. One speaks of a mass subcritical case if it is bounded from below on $S(c)$ for any $c>0$, and of a mass supercritical case if the functional is unbounded from below on $S(c)$  for any $c>0$.  One also refers to  a mass critical case when the boundedness from below does depend on the value $c>0$. To be more precise, consider an equation of the form
\begin{align}
i \partial_t v + \Delta v + \mu v |v|^{p_1-2} + v |v|^{p_2 - 2} = 0, \quad (t, x) \in \R \times \Rn,  \label{NLS0E}
\end{align}
where it is assumed that $2< p_1 \leq p_2 \leq 2^*.$ The threshold exponent is the so-called $L^2$-critical exponent
$$p_c = 2 + \frac{4}{N}.$$
A very complete analysis of the various cases that may happen for \eqref{NLS0E}, depending on the values of $(p_1,p_2)$, has been provided recently in \cite{Soave2020,Soave2020Sobolevcriticalcase}. Let us just recall here some rough elements. If both $p_1$ and $p_2$ are strictly less than $p_c$ then the associated Energy functional is bounded from below on $S(c)$ and to find a ground state one looks for a global minimum on $S(c)$. The problem then directly falls into the setting covered by the Compactness by Concentration Principle introduced by P.L. Lions \cite{LIONS1984-1, LIONS1984-2} which, for more complicated equations, in particular non autonomous ones, is still a very active field. Such solutions are expected to be orbitally stable, see \cref{def:stability} below.  If $p_c \leq p_1 \leq p_2 \leq 2^*$, then the  Energy functional is unbounded from below on $S(c)$ but it is possible to show that a ground state exists. This ground state is characterized as a critical point of {\it mountain-pass type} and it lies at a strictly positive level of the Energy functional. Such ground states are expected to be orbitally unstable. We refer, for the link between the variational characterization of a solution and its instability, to the classical paper \cite{BerestyckiCazenave1981}, and to \cite{JEANJEAN1997,Lecoz2008,Soave2020,Soave2020Sobolevcriticalcase} for more recent developments. \medskip

In the case we consider here : $2 < p_1 < p_c <p_2 = 2^*$, the Energy functional  is thus unbounded from below on $S(c)$ but, as we shall see, the presence of the lower order, mass subcritical term $- \mu \normLp{u}{q}^q$ creates, for sufficiently small values of $c>0$, a geometry of local minima on $S(c).$ The presence of such geometry, in problems which are mass supercritical, had already been observed in several related situations. In \cite{BellazziniJeanjean2016, BellazziniBoussaidJeanjeanVisciglia17} for related scalar problems, in 
\cite{GouJeanjean2018} in the case of a system or \cite{NorisTavaresVerzini2019} for an evolution problem set on a bounded domain.  Actually, it was already observed on \eqref{NLS0} in \cite{Soave2020Sobolevcriticalcase}.\medskip

Precisely, for any fixed $\mu >0$, we shall find an explicit value $c_0 = c_0(\mu) >0$ such that, for any $c \in (0, c_0)$, there exists a set $V(c) \subset S(c)$ having the property that
\begin{equation}\label{well}
m(c) := \inf_{u \in V(c)} F_{\mu}(u) < 0 <  \inf_{u \in \partial V(c)}F_{\mu}(u).
\end{equation}
The sets $V(c)$ and $\partial V(c)$ are given by
$$V(c) := \{ u \in S(c) : \normLp{\diff u}{2}^2  < \rho_0\}, \qquad \partial V(c) := \{ u \in S(c) : \normLp{\diff u}{2}^2  = \rho_0\}$$
for a suitable $\rho_0 >0$, depending only on $c_0 >0$ but not on $c \in (0,c_0)$.
We also introduce the set
\begin{align*}
\mathcal{M}_c := \{u \in V(c) : F_{\mu}(u) = m(c)\}.
\end{align*}
Our first result is,
\begin{theorem}\label{thm-1}
	Let $N \geq 3$, $2 < q < 2 + \frac{4}{N}$. For any $\mu >0$ there exists a $c_0 = c_0(\mu) >0$ such that, for any $c \in (0, c_0)$,  $F_{\mu}$ restricted to $S(c)$
	has a ground state. This ground state is a (local) minimizer of $F_{\mu}$ in the set $V(c)$ and any ground state for $F_{\mu}$ on $S(c)$ is a local minimizer of $F_{\mu}$ on $V(c)$. In addition, if $(u_n) \subset V(c)$ is such that $F_{\mu}(u_n) \to m(c)$ then, up to translation,  $u_n \to u \in \mathcal{M}_c$ in $\mathit{H}^1(\Rn,\C)$. 
\end{theorem}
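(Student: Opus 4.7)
The plan is to establish the geometric structure \eqref{well}, then prove compactness of minimizing sequences on $V(c)$ up to translations, and finally identify ground states with local minimizers. For the first step, I would combine Gagliardo--Nirenberg (with exponent $\gamma_q = N(q-2)/(2q)$ satisfying $q\gamma_q<2$ since $q<2+4/N$) with Sobolev's inequality $\|u\|_{2^*}^{2^*}\leq S^{-2^*/2}\|\nabla u\|_2^{2^*}$ to obtain on $S(c)$ the lower bound $F_\mu(u)\geq t/2 - a_1(c)\,t^{q\gamma_q/2} - a_2\,t^{2^*/2}$ with $t = \|\nabla u\|_2^2$ and $a_1(c)\to 0$ as $c\to 0$. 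Since $q\gamma_q/2<1<2^*/2$, elementary analysis produces $\rho_0>0$ and $c_0(\mu)>0$, with $\rho_0$ independent of $c\in(0,c_0)$, so that this bound is strictly positive at $t=\rho_0$. The negativity $m(c)<0$ then follows by evaluating $F_\mu$ on the scaled family $u_\lambda(x)=\lambda^{N/2}u(\lambda x)\in S(c)$ as $\lambda\to 0^+$: the mass-subcritical term dominates and $F_\mu(u_\lambda)\to 0^-$ while $\|\nabla u_\lambda\|_2^2\to 0$.

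For compactness, let $(u_n)\subset V(c)$ with $F_\mu(u_n)\to m(c)<0$. I fix $\rho_0$ small enough that $\tfrac{1}{2^*}S^{-2^*/2}\rho_0^{(2^*-2)/2}\leq \tfrac14$, so that Sobolev gives the absorption $\tfrac{1}{2^*}\|w\|_{2^*}^{2^*}\leq \tfrac14\|\nabla w\|_2^2$ whenever $\|\nabla w\|_2^2\leq \rho_0$. Lions-vanishing is ruled out: it would imply $u_n\to 0$ in $L^q$ and hence $F_\mu(u_n)\geq \tfrac14\|\nabla u_n\|_2^2+o(1)\geq o(1)$, contradicting $m(c)<0$. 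Hence, after translations, $u_n(\cdot+y_n)\weakto u\neq 0$ in $H^1$. Setting $v_n=u_n-u\weakto 0$, Brezis--Lieb gives $\|u_n\|_r^r = \|u\|_r^r + \|v_n\|_r^r + o(1)$ for $r\in\{2,q,2^*\}$ and $\|\nabla u_n\|_2^2=\|\nabla u\|_2^2+\|\nabla v_n\|_2^2+o(1)$, whence
\begin{equation*}
F_\mu(u_n) = F_\mu(u) + F_\mu(v_n) + o(1), \qquad \|v_n\|_2^2 \to c - \|u\|_2^2.
\end{equation*}
A rescaling $w=\sqrt{c'/c}\,u_c^\epsilon\in V(c')$ of a near-minimizer $u_c^\epsilon$ at mass $c$, exploiting $q,2^*>2$, yields $F_\mu(w)<(c'/c)F_\mu(u_c^\epsilon)$, giving the strict monotonicity $m(c')\leq (c'/c)m(c)<m(c)$ for $0<c<c'<c_0$. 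Iterating the non-vanishing argument on $(v_n)$ together with this strict monotonicity rules out dichotomy, so $\|v_n\|_q\to 0$; then $F_\mu(v_n) = \tfrac12\|\nabla v_n\|_2^2 - \tfrac{1}{2^*}\|v_n\|_{2^*}^{2^*}+o(1)\geq \tfrac14\|\nabla v_n\|_2^2+o(1)\geq o(1)$. Hence $m(c)\geq F_\mu(u)\geq m(\|u\|_2^2)$, and strict monotonicity forces $\|u\|_2^2=c$; then $\|v_n\|_2\to 0$, the energy identity forces $\|\nabla v_n\|_2\to 0$, and $u_n\to u$ in $H^1$. Since $F_\mu(u)=m(c)<0<\inf_{\partial V(c)}F_\mu$, we have $u\in V(c)$, i.e.\ $u\in\mathcal{M}_c$.

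For the identification of ground states, any critical point $\tilde u\in S(c)$ of $F_\mu|_{S(c)}$ satisfies the Pohozaev identity $\|\nabla\tilde u\|_2^2=\mu\gamma_q\|\tilde u\|_q^q+\|\tilde u\|_{2^*}^{2^*}$, whence $F_\mu(\tilde u)=\tfrac{1}{N}\|\tilde u\|_{2^*}^{2^*}-\mu\alpha\|\tilde u\|_q^q$ with $\alpha=\tfrac{1}{q}-\tfrac{\gamma_q}{2}>0$. If $\tilde u$ is a ground state, then $F_\mu(\tilde u)\leq m(c)<0$ forces $\|\tilde u\|_{2^*}^{2^*}<N\mu\alpha\|\tilde u\|_q^q$; Pohozaev gives $\|\nabla\tilde u\|_2^2\leq C_1(\mu)\|\tilde u\|_q^q$, and Gagliardo--Nirenberg then yields $\|\nabla\tilde u\|_2^{2-q\gamma_q}\leq C_2(\mu)\,c^{q(1-\gamma_q)/2}$, so (shrinking $c_0$ if necessary) $\|\nabla\tilde u\|_2^2<\rho_0$, i.e.\ $\tilde u\in V(c)$. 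Conversely, the minimizer found in the compactness argument lies strictly inside $V(c)$ in the $S(c)$-topology (from $\|\nabla u\|_2^2<\rho_0$) and is therefore a constrained critical point, so the two notions coincide.

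The main difficulty is the Sobolev-critical term, which a priori allows the Brezis--Lieb residual to sustain an Aubin--Talenti bubble of positive energy $\tfrac{1}{N}S^{N/2}$. What defuses this is the $c$-independent smallness built into $V(c)$: choosing $\rho_0$ so that Sobolev absorbs the $L^{2^*}$-term against the kinetic energy uniformly on $V(c)$, together with the strict monotonicity of $m$, excludes both bubble formation inside $V(c)$ and mass dispersion into separated components, yielding the strong $H^1$-convergence.
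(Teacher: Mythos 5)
Your overall architecture coincides with the paper's: the local-minima geometry on $V(c)$ obtained from Gagliardo--Nirenberg plus Sobolev, negativity of $m(c)$ via the mass-preserving scaling $s^{N/2}u(sx)$ as $s\to 0$, a Brezis--Lieb splitting $F_\mu(u_n)=F_\mu(u)+F_\mu(v_n)+o_n(1)$ after excluding vanishing, and scaling (sub)additivity of $m$ to exclude dichotomy. One sub-argument is genuinely different: to place every ground state inside $V(c)$ you combine the Pohozaev identity $F_\mu(\tilde u)=\tfrac1N\|\tilde u\|_{2^*}^{2^*}-\mu\alpha\|\tilde u\|_q^q$ with Gagliardo--Nirenberg to force $\|\nabla\tilde u\|_2^2\to 0$ as $c\to 0$, at the price of possibly shrinking $c_0$; the paper instead analyzes the fibering map $s\mapsto F_\mu(u_s)$, shows its derivative has at most two zeros, and localizes every ground state at the first one, which works for the original $c_0$. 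Both are acceptable.

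There are, however, two steps you assert rather than prove, and the first is a genuine gap. (a) You claim that strict monotonicity of $m$ ``rules out dichotomy, so $\|v_n\|_q\to 0$.'' The strict monotonicity you derive, $m(c')\le (c'/c)\,m(c)<m(c)$, uses only $m(c)<0$ and is not sufficient here. If $c_1:=\|u\|_2^2<c$, the splitting together with $v_n\in V(\|v_n\|_2^2)$ only yields $m(c)\ge m(c_1)+m(c-c_1)$, and since the scaling inequality gives $\tfrac{c_1}{c}m(c)\le m(c_1)$ and $\tfrac{c-c_1}{c}m(c)\le m(c-c_1)$, this chain is compatible with equality throughout; monotonicity alone produces no contradiction, and without it you cannot conclude $\liminf F_\mu(v_n)\ge 0$ (the term $-\tfrac{\mu}{q}\|v_n\|_q^q$ need not vanish). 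What closes the argument, and what the paper proves, is that $m(\theta\alpha)\le\theta m(\alpha)$ is \emph{strict when $m(\alpha)$ is attained}: equality in the chain forces $F_\mu(u)=m(c_1)$, i.e.\ attainment at mass $c_1$, and only then does the strict version give the contradiction. You need to add this refinement. (b) You assert $w=\sqrt{c'/c}\,u_c^\epsilon\in V(c')$, but $\|\nabla w\|_2^2=(c'/c)\|\nabla u_c^\epsilon\|_2^2$ with $c'/c>1$, while a priori you only know $\|\nabla u_c^\epsilon\|_2^2<\rho_0$. One must first show that a near-minimizer with $F_\mu<0$ at mass $c$ satisfies the stronger bound $\|\nabla u_c^\epsilon\|_2^2<(c/c')\rho_0$; the paper gets this from the sign of $f(c,\rho)$ on the interval $[(c/c')\rho_0,\rho_0]$. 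This is fixable but not optional: it is exactly the point where the local, rather than global, nature of the minimization intervenes.
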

\begin{remark}
	The value of $c_0 = c_0(\mu) >0$ is explicit and is given in \eqref{eqn:5.14}-\eqref{eqn:5.14B}. In particular $c_0 >0$ can be taken arbitrary large by taking $\mu >0$ small enough.
\end{remark}

\begin{remark} \label{remark-th1}$ $
\begin{itemize}
\item[(i)] If $u \in S(c)$ is a ground state then the associated Lagrange multiplier $\lambda \in \R$ in \eqref{eqn:Laplace} satisfies $\lambda < 0$. This follows directly combining that $u$ being a solution to \eqref{eqn:Laplace} it satisfies $\normLp{\diff u}{2}^2 - \lambda \normLp{u}{2}^2    -
\mu \normLp{u}{q}^q - \normLp{u}{2^*}^{2^*} =0$ with the fact that $F_{\mu}(u) = m(c) <0.$
\smallbreak
\item[(ii)] There exists a ground state which is a real valued, positive, radially symmetric decreasing function. Indeed if $u \in S(c)$ is a ground state then its Schwartz symmetrization is clearly also a ground state.
\smallbreak
\item[(iii)] More globally, under the assumption of \cref{thm-1} it can be proved that, for any $c \in (0, c_0)$, $\mathcal{M}_c$ has the following structure:
	$$\mathcal{M}_c = \{ e^{i \theta}u, \mbox{ for some } \theta \in \R, u \in \tilde{\mathcal{M}}_c, u >0 \},$$
	where
	$$\tilde{\mathcal{M}}_c = \{ u \in S(c) \cap H^1(\R^N, \R), F_{\mu}(u) = m(c)\}.$$
	Indeed, this description directly follows from the convergence, up to translation, of the minimizing sequences of $F_{\mu}$ restricted 
	to $V(c)$, applying the argument of \cite[Section 3]{HajaiejStuart2004}. We leave the details to the interested reader.
\end{itemize}
\end{remark}

We shall now focus on the (orbital) stability of the set $\mathcal{M}_c$. 
Following the terminology of \cite{CazenaveLions1982}, see also \cite{HajaiejStuart2004}, we give the following definition.
\begin{definition}\label{def:stability}
	$Z \subset \Hoc$ is stable if : $Z \neq \emptyset $ and for any $v \in Z $ and any $ \varepsilon >0$, there exists a $\delta >0$ such that if $\varphi \in \Hoc$ satisfies $||\varphi- v||_{\Hoc} < \delta$ then 
	 $u_{\varphi}(t)$ is globally defined and $\inf_{z \in Z} ||u_{\varphi}(t) - z||_{\Hoc} < \varepsilon$ for all 
	$t \in \R$, where $u_{\varphi}(t)$ is the solution to \eqref{NLS0} corresponding to the initial condition $\varphi$.
\end{definition}

Notice that the orbital stability of the set $Z$ implies the global existence 
of solutions to \eqref{NLS0} for initial datum $\varphi$ close enough to the set $Z$. We underline that this fact 
is non trivial due to the critical exponent that appears in \eqref{NLS0}, even if the $H$ norm of the solution
is uniformly bounded on the lifespan of the solution.

The fact that ground states are characterized as local minima suggests, despite the problem being mass supercritical, that the set $\mathcal{M}_c$ could be orbitally stable. Actually, such orbital stability results have now been proved, on related problems (but always Sobolev subcritical) in several recent papers \cite{BellazziniBoussaidJeanjeanVisciglia17,GouJeanjean2018,Soave2020}.  Along this line we now present the main result of this paper.
\begin{theorem}\label{thm-2}
	Let $N \geq 3$, $2 < q < 2 + \frac{4}{N}$,  $\mu >0$ and  $c_0 = c_0(\mu) >0$ be given in \cref{thm-1}. Then, for any $c \in (0, c_0),$ the set $\mathcal{M}_c$ is compact, up to translation, and it is orbitally stable.
\end{theorem}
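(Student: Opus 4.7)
The compactness of $\mathcal{M}_c$ up to translations is a direct consequence of \cref{thm-1}: any sequence $(u_n) \subset \mathcal{M}_c$ satisfies $u_n \in V(c)$ and $F_\mu(u_n) = m(c)$, hence is a minimizing sequence for $F_\mu\big|_{V(c)}$; the last statement of \cref{thm-1} then yields translations $y_n \in \R^N$ and $u \in \mathcal{M}_c$ with $u_n(\cdot - y_n) \to u$ in $H$.

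For the orbital stability, I would argue by contradiction. Suppose there exist $v \in \mathcal{M}_c$, $\varepsilon_0 > 0$, initial data $\varphi_n \to v$ in $H$, and times $t_n \in [0, T_n^*)$, where $T_n^* \in (0, +\infty]$ denotes the maximal time of existence of $u_n := u_{\varphi_n}$, such that
\[
\inf_{z \in \mathcal{M}_c} \|u_n(t_n) - z\|_H \geq \varepsilon_0.
\]
Conservation of mass and energy give $c_n := \|u_n(t)\|_2^2 = \|\varphi_n\|_2^2 \to c$ and $F_\mu(u_n(t)) = F_\mu(\varphi_n) \to F_\mu(v) = m(c)$ for all $t \in [0, T_n^*)$. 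The next step is a trapping argument using the barrier \eqref{well}: since $\varphi_n \to v \in V(c)$, for $n$ large one has $u_n(0) \in V(c_n)$ with $\|\diff u_n(0)\|_2^2 < \rho_0$. If $u_n(t)$ ever reached $\partial V(c_n)$ at some time $\tau < T_n^*$, the continuity of $t \mapsto \|\diff u_n(t)\|_2^2$ would place $u_n(\tau) \in \partial V(c_n)$, and by the analogue of \eqref{well} for $c_n$ close to $c$ (the infimum over $\partial V(c)$ being continuous in $c$), one would obtain $F_\mu(u_n(\tau)) \geq \tfrac12 \inf_{\partial V(c)} F_\mu > 0$, contradicting $F_\mu(u_n(\tau)) \to m(c) < 0$. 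Thus $\|\diff u_n(t)\|_2^2 < \rho_0$ for all $t \in [0, T_n^*)$.

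The \textbf{main obstacle} is to upgrade this uniform $H^1$-bound into global existence, i.e.\ $T_n^* = +\infty$. Because of the Sobolev critical term $|u|^{2^*-2}u$, the Cazenave--Weissler local theory only produces a lifespan controlled by suitable Strichartz norms, not by the $H^1$ norm alone. To overcome this, I would exploit the smallness of $\rho_0$: up to taking $c_0(\mu)$ smaller if necessary, $\rho_0$ can be forced into the small-data regime of the energy-critical NLS, where Strichartz-based fixed-point iteration yields global wellposedness and uniform control; the mass-subcritical perturbation $\mu|u|^{q-2}u$ is harmless since $q < 2 + 4/N < 2^*$, and can be absorbed by Hölder and Sobolev embedding. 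Iterating the local theory on successive time intervals, using at each restart the uniform bound $\|\diff u_n(t)\|_2^2 < \rho_0$, then gives $T_n^* = +\infty$.

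Finally, to derive the contradiction, set $\alpha_n := \sqrt{c/c_n} \to 1$ and $\tilde u_n := \alpha_n u_n(t_n)$. Then $\tilde u_n \in S(c)$, $\|\diff \tilde u_n\|_2^2 = \alpha_n^2 \|\diff u_n(t_n)\|_2^2 < \rho_0$ for $n$ large (so $\tilde u_n \in V(c)$), and $F_\mu(\tilde u_n) \to F_\mu(\varphi_n) = m(c) + o(1)$ by continuity of $F_\mu$ and $\alpha_n \to 1$. Hence $(\tilde u_n)$ is a minimizing sequence for $F_\mu\big|_{V(c)}$, and \cref{thm-1} provides translations making $\tilde u_n$ converge in $H$ to some $u \in \mathcal{M}_c$. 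Since $\|\tilde u_n - u_n(t_n)\|_H \to 0$, the same holds for $u_n(t_n)$, contradicting the assumption $\inf_{z \in \mathcal{M}_c}\|u_n(t_n) - z\|_H \geq \varepsilon_0$.
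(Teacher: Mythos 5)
Your compactness argument, the trapping of the flow in $B_{\rho_0}$ via the conservation laws and \cref{Lemma-structure}~(i), and the final contradiction via the convergence of minimizing sequences are all correct and essentially identical to the paper's proof (see \cref{theorem:1}). The genuine gap is in the step you yourself flag as the main obstacle: global existence. You propose to reach the small-data regime of the energy-critical NLS by ``taking $c_0(\mu)$ smaller if necessary'' so that $\rho_0$ is below the small-data threshold. But the theorem is stated for the \emph{explicit} value $c_0=c_0(\mu)$ of \eqref{eqn:5.14}, and $\rho_0=\rho_{c_0}$ is the correspondingly explicit quantity from \eqref{maxL}; there is no smallness hypothesis on either. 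Shrinking $c_0$ redefines $\rho_0$, hence $V(c)$, $m(c)$ and $\mathcal{M}_c$, and proves a strictly weaker statement than the one asserted (recall that the paper even emphasizes that $c_0$ can be taken arbitrarily large by taking $\mu$ small). Without smallness of $\rho_0$, a uniform $H^1$ bound on the trapped trajectory gives no lower bound on the lifespan, precisely because the blow-up alternative of the subcritical theory is unavailable at the critical exponent; your iteration scheme therefore cannot be started.

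The paper closes this gap without any smallness by exploiting compactness instead. For a set $\mathcal{K}$ compact up to translation, one shows (\cref{proposition:1}, via Strichartz, translation invariance and dominated convergence, using $p<\infty$) that $\sup\{\|e^{it\Delta}\varphi\|_{X_T}:\ \mathrm{dist}_H(\varphi,\mathcal{K})<\varepsilon\}<\gamma_0$ for some uniform $T=T_0>0$ and $\varepsilon>0$, where $\gamma_0$ is the threshold of the local theory (\cref{prop:cauchy}). Taking $\mathcal{K}=\mathcal{M}_c$ (compact up to translation by \cref{theorem:LT-L}) and combining with \cref{theorem:1}, which guarantees that the solution \emph{stays} within distance $\varepsilon_0$ of $\mathcal{M}_c$ on its whole lifespan, one can restart the local theory at every time with the same uniform lifespan $T_0$, forcing $T^{max}_\varphi=\infty$ (\cref{theorem:2,theorem:3}). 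Note that this makes the compactness of $\mathcal{M}_c$ do double duty: in your proposal it is used only for the final contradiction, whereas in the paper it is also the key to global existence. If you want to salvage your write-up, replace the small-data step by this compactness-based uniform local existence argument.
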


In \cite{Soave2020Sobolevcriticalcase}, Soave studied  equation \eqref{NLS0} and derived, for any small $c>0$ depending on $\mu>0$, an existence result which is very similar to the one contained in \cref{thm-1}, see \cite[Theorem 1.1]{Soave2020Sobolevcriticalcase}. Actually, the motivation of our study originated from \cite{Soave2020Sobolevcriticalcase} where what is now our \cref{thm-2} was proposed as an open problem.
However, it does not seem possible to use \cite[Theorem 1.1]{Soave2020Sobolevcriticalcase} as a starting point to prove \cref{thm-2}. The existence of a ground state in \cite[Theorem 1.1]{Soave2020Sobolevcriticalcase} is obtained through the study of one particular (locally) minimizing sequence which is radially symmetric. As already explained in \cite{Soave2020Sobolevcriticalcase}, to obtain the orbital stability of the set $\mathcal{M}_c$, following the classical approach laid down in \cite{CazenaveLions1982}, two ingredients are essential. First, the relative compactness, up to translation, of all minimizing sequences for $F_{\mu}$ on $V(c)$, as guaranteed by our \cref{thm-1}. Secondly, the global existence of solutions to \eqref{NLS0} for initial data close to $\mathcal{M}_c$.

To obtain the relative compactness of all minimizing sequences, the fact that one minimizes only on a subset of $S(c)$, in contrast to a global minimization on all $S(c)$, increases the difficulty to rule out a possible {\it dichotomy}. Different strategies have been recently implemented to deal with this issue \cite{BellazziniBoussaidJeanjeanVisciglia17,GouJeanjean2018,Soave2020}, all relying on a suitable choice of the set where the local minima are searched. In the presence of a Sobolev critical term an additional difficulty arises. In a Sobolev subcritical setting, if a sequence $(v_n) \subset S(c)$ is {\it vanishing} then applying \cite[Lemma I.1]{LIONS1984-2} one would immediately get 
$$\liminf_{n \to \infty} F_{\mu}(v_n) = \liminf_{n \to \infty}\frac{1}{2}||v_n||_2^2 \geq 0.$$
Thus the {\it vanishing} can directly be ruled out knowing that $m(c) <0$. Here \cite[Lemma I.1]{LIONS1984-2} does not apply anymore; the term $||v_n||_{2^*}$ may not go to $0$ if $(v_n)$ is {\it vanishing}. Thus we need a better understanding of this possible loss of compactness and this leads to our definition of the set $V(c)$.

As to the global existence of solutions to \eqref{NLS0}, it is also affected by the presence of the Sobolev critical exponent. In Sobolev subcritical cases, it is well known \cite{Cazenave2003semilinear} that if, for an initial datum $\varphi \in H$, the maximum time of existence $T_{\varphi}^{max}>0$ is finite then necessarily the corresponding solution $v$ satisfies $||\nabla v(t)||_2 \to + \infty$ as $t \to T_{\varphi}^{max}$. Thus, a uniform a priori bound on $||\nabla v(t)||_2$ yields global existence. Note that, by conservation of the Mass and Energy, in view of \eqref{well},
for an initial datum in $V(c) \cap \{u \in S(c) : F_{\mu}(u) <0\}$, the evolution takes place in the (bounded) set $V(c)$. Thus, in a subcritical setting, the global existence would follow directly. However, in our case it is unknown if the previous blow-up alternative holds and hence, we cannot deduce global existence just since the evolution takes place in $V(c)$, see \cite[Theorem 4.5.1]{Cazenave2003semilinear} or \cite[Proposition 3.2]{TaoVisanZhang07} for more details.  To overcome this difficulty, building on the pioneering work of Cazenave-Weissler \cite{CazenaveWeissler1990}, see also \cite[Section 4.5]{Cazenave2003semilinear} , we first derive an upper bound on the propagator $e^{it \Delta}$ which provides a kind of uniform local existence result, see \cref{prop:cauchy}. Next, using the information that all minimizing sequences are, up to translation, compact and also specifically and crucially that $\mathcal{M}_c$ is compact, up to translation, we manage to show that, for initial data sufficiently close to the set $\mathcal{M}_c$ the global existence holds and this leads to the orbital stability of $\mathcal{M}_c$, proving \cref{thm-2}. \medskip 

We point out that, in order to prove \cref{thm-2}, we have only established the global existence of solutions for initial data {\it close} to $\mathcal{M}_c$. We believe it would be interesting to inquire if the global existence holds {\it away} from $\mathcal{M}_c$, typically for any initial data in $V(c) \cap \{u \in S(c) : F_{\mu}(u) <0\}$. If so, investigating the long time behavior of these solutions would be worth to. Our guess is that these solutions evolve toward the sum of an element of $\mathcal{M}_c$ and a part which scatter. However, so far nothing is known in that direction. \medskip

The paper is organized as follows. \cref{Section-3}, is devoted to clarifying the local minima structure and to establish the convergence, up to translation, of all minimizing sequences for $F_{\mu}$ on $V(c)$. The proof of \cref{thm-1} is then given. In \cref{Section-4}, we establish \cref{prop:cauchy}. Finally, in \cref{Section-5}, we prove \cref{thm-2} which states the orbital stability of the set $\mathcal{M}_c$. \medskip

{\bf Notation :} We write $\Hoc$ for $\mathit{H}^1(\Rn,\C)$.
For $p \geq 1$, the $\mathit{L}^p$-norm of $u \in \Hoc$ (or of $u \in \Hor$) is denoted by $\normLp{u}{p}$.


\section{The variational problem} \label{Section-3}

We shall make use of the following classical inequalities :
For any $N \geq 3$  there exists an optimal constant
$\mathcal{S} > 0$ depending only on $N$, such that
\begin{align}\label{Sobolev-I}
\mathcal{S} \norm{f}_{2^*}^{2} \leq \norm{\diff f}_{2}^{2} , \qquad \forall f \in H, \quad \mbox{(Sobolev inequality)}
\end{align}
see \cite[Theorem IX.9]{Brezis1983}. 
If
$N \geq 2$ and $p \in [2, \frac{2N}{N-2})$ then
\begin{align}\label{Gagliardo-Nirenberg-I}
\norm{f}_{p} \leq C_{N,p} \norm{\diff f}_{2}^{\beta} \norm{f}_{2}^{(1-\beta)}, \qquad \mbox{with } \beta = N\(\dfrac{1}{2} - \dfrac{1}{p}\) \quad \mbox{(Gagliardo-Nirenberg inequality),}
\end{align}
for all $f \in H$, see \cite{Nirenberg1959}.
Now, letting
\begin{align*}
\alpha_0 := \dfrac{N(q-2)}{2} -2, \qquad  \alpha_1:= \dfrac{2N - q(N-2)}{2}, \qquad \alpha_2: = \dfrac{4}{N-2},
\end{align*}
we consider the function $f(c,\rho)$ defined on $(0, \infty) \times (0, \infty)$ by
\begin{align*}
f(c,\rho) = \frac{1}{2} - \frac{\mu}{q} C_{N,q}^q \rho^{\frac{\alpha_0}{2} } c^{\frac{\alpha_1}{2} } - \frac{1}{2^*} \dfrac{1}{\mathcal{S}^{\frac{2^*}{2}}} \rho^{\frac{\alpha_2}{2} },
\end{align*}
and, for each $c \in (0, \infty)$, its restriction $g_c(\rho)$ defined on $(0, \infty)$ by $\rho \mapsto g_c(\rho) := f(c, \rho).$
\\
For future reference, note that for any $N \geq 3 $,  $\alpha_0 \in (-2,0)$, 
$\alpha_1 \in \[\dfrac{4}{N}, 2\)$ and $\alpha_2 \in (0,4].$

\begin{lemma}\label{lemma:5.1}
	For each $c > 0$, the function $g_c(\rho)$ has a unique global maximum and the maximum value satisfies
	\begin{align*}  
	\begin{cases} 
	\displaystyle \max_{\rho > 0} g_{c}(\rho) > 0 \quad \mbox{if} \quad c < c_0,\\
	\displaystyle \max_{\rho > 0} g_{c}(\rho) = 0 \quad \mbox{if} \quad c = c_0,\\
	\displaystyle \max_{\rho > 0} g_{c}(\rho) < 0 \quad \mbox{if} \quad c > c_0,
	\end{cases}
	\end{align*}
	where
	\begin{align}\label{eqn:5.14}
	c_0 := \(\dfrac{1}{2K}\)^{\frac{N}{2}} > 0, 
	\end{align}
	with
	\begin{align}\label{eqn:5.14B}
	K := \frac{\mu}{q} C_{N,q}^q \[- \dfrac{\alpha_0}{\alpha_2} \frac{\mu C_{N,q}^q 2^* \mathcal{S}^{\frac{2^*}{2}}}{q} \]^{\frac{\alpha_0}{\alpha_2 - \alpha_0}} + \frac{1}{2^*} \dfrac{1}{\mathcal{S}^{\frac{2^*}{2}}} \[- \dfrac{\alpha_0}{\alpha_2} \frac{\mu C_{N,q}^q 2^* \mathcal{S}^{\frac{2^*}{2}}}{q} \]^{\frac{\alpha_2}{\alpha_2 - \alpha_0}} > 0.
	\end{align}
\end{lemma}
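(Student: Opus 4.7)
The plan is to analyze $g_c$ as a one-variable function using its boundary behavior and a direct computation of its critical point. Since $\alpha_0 \in (-2, 0)$ we have $\rho^{\alpha_0/2} \to +\infty$ as $\rho \to 0^+$, and since $\alpha_2 \in (0, 4]$ we have $\rho^{\alpha_2/2} \to +\infty$ as $\rho \to +\infty$. Combined with the two negative coefficients in the definition of $g_c$, this forces $g_c(\rho) \to -\infty$ at both endpoints of $(0,\infty)$, so $g_c$ attains a global maximum at some interior critical point. Setting
\begin{equation*}
g_c'(\rho) = -\frac{\mu C_{N,q}^q \alpha_0}{2q}\, c^{\alpha_1/2}\, \rho^{\alpha_0/2 - 1} - \frac{\alpha_2}{2 \cdot 2^* \mathcal{S}^{2^*/2}}\, \rho^{\alpha_2/2 - 1} = 0
\end{equation*}
and separating powers of $\rho$ yields
\begin{equation*}
\rho^{(\alpha_2 - \alpha_0)/2} = -\frac{\alpha_0}{\alpha_2}\, \frac{\mu C_{N,q}^q\, 2^*\, \mathcal{S}^{2^*/2}}{q}\, c^{\alpha_1/2},
\end{equation*}
whose right-hand side is strictly positive since $-\alpha_0>0$. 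As $\alpha_2-\alpha_0>0$, this equation has a unique positive solution $\rho_c$, so $\rho_c$ is the unique critical point of $g_c$, hence the unique global maximum.

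Next I would substitute $\rho_c$ back into $g_c$. Using the critical point identity to express both $\rho_c^{\alpha_0/2}$ and $\rho_c^{\alpha_2/2}$ as powers of the bracket $-\frac{\alpha_0}{\alpha_2}\frac{\mu C_{N,q}^q 2^* \mathcal{S}^{2^*/2}}{q}$ times powers of $c$, a short manipulation using $\tfrac{\alpha_0 \alpha_1}{2(\alpha_2-\alpha_0)} + \tfrac{\alpha_1}{2} = \tfrac{\alpha_1 \alpha_2}{2(\alpha_2-\alpha_0)}$ shows that the two remaining terms in $g_c(\rho_c)$ carry the same power of $c$, and that the total numerical coefficient in front of that power is exactly $K$ as given in \eqref{eqn:5.14B}. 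Hence
\begin{equation*}
\max_{\rho > 0} g_c(\rho) = \frac{1}{2} - K\, c^{\alpha_1 \alpha_2/(2(\alpha_2 - \alpha_0))}.
\end{equation*}

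The only mildly delicate part of the argument is the final arithmetic check that this $c$-exponent equals $2/N$: inserting $\alpha_0 = N(q-2)/2 - 2$, $\alpha_1 = (2N-q(N-2))/2$, $\alpha_2 = 4/(N-2)$ and simplifying gives the identity $\alpha_2 - \alpha_0 = N\alpha_1/(N-2)$, from which $\alpha_1\alpha_2/(2(\alpha_2-\alpha_0)) = 2/N$ at once. The trichotomy claimed in the lemma then follows immediately from the strict monotonicity of $c \mapsto \tfrac{1}{2} - K c^{2/N}$ on $(0, \infty)$ together with the definition $c_0 = (2K)^{-N/2}$.
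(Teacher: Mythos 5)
Your proposal is correct and follows essentially the same route as the paper: boundary behavior of $g_c$ at $0$ and $\infty$ plus uniqueness of the critical point to identify the global maximum, substitution of $\rho_c$ back into $g_c$ to obtain $\max_{\rho>0}g_c(\rho)=\tfrac12-Kc^{2/N}$, and then the trichotomy from monotonicity in $c$. Your explicit verification that the $c$-exponent equals $2/N$ is a detail the paper leaves implicit, but the argument is the same.
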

\begin{proof}
	By definition of $g_c(\rho)$, we have that
	\begin{align*}
	g_c'(\rho) = - \dfrac{\alpha_0}{2} \frac{\mu}{q} C_{N,q}^q \rho^{\frac{\alpha_0}{2} -1} c^{\frac{\alpha_1}{2} }
	- \dfrac{\alpha_2}{2} \frac{1}{2^*} \dfrac{1}{\mathcal{S}^{\frac{2^*}{2}}} \rho^{\frac{\alpha_2}{2} -1 }.
	\end{align*}
	Hence, the equation $g_c'(\rho) = 0$ has a unique solution given by
	\begin{align}\label{maxL}
	\rho_c = \[- \dfrac{\alpha_0}{\alpha_2} \frac{\mu C_{N,q}^q 2^* \mathcal{S}^{\frac{2^*}{2}}}{q} \]^{\frac{2}{\alpha_2 - \alpha_0}}  c^{\frac{\alpha_1}{\alpha_2 - \alpha_0}}.
	\end{align}
	Taking into account that $g_c(\rho) \to -\infty$ as $\rho \to 0$ and $g_c(\rho) \to -\infty$ as $\rho \to \infty$, we obtain that $\rho_c$ is the unique global maximum point of $g_c(\rho)$ and the maximum value is
	\begin{align*}
	\max_{\rho > 0} g_{c}(\rho) &= \frac{1}{2} - \frac{\mu}{q} C_{N,q}^q \[- \dfrac{\alpha_0}{\alpha_2} \frac{\mu C_{N,q}^q 2^* \mathcal{S}^{\frac{2^*}{2}}}{q} \]^{\frac{\alpha_0}{\alpha_2 - \alpha_0}}  c^{\frac{\alpha_0\alpha_1}{2(\alpha_2 - \alpha_0)}} c^{\frac{\alpha_1}{2} } 
	 - \frac{1}{2^*} \dfrac{1}{\mathcal{S}^{\frac{2^*}{2}}} \[- \dfrac{\alpha_0}{\alpha_2} \frac{\mu C_{N,q}^q 2^* \mathcal{S}^{\frac{2^*}{2}}}{q} \]^{\frac{\alpha_2}{\alpha_2 - \alpha_0}}  c^{\frac{\alpha_1\alpha_2}{2(\alpha_2 - \alpha_0)}} \\
	&= \frac{1}{2} - \frac{\mu}{q} C_{N,q}^q \[- \dfrac{\alpha_0}{\alpha_2} \frac{\mu C_{N,q}^q 2^* \mathcal{S}^{\frac{2^*}{2}}}{q} \]^{\frac{\alpha_0}{\alpha_2 - \alpha_0}}  c^{\frac{\alpha_1\alpha_2}{2(\alpha_2 - \alpha_0)}} 
	 - \frac{1}{2^*} \dfrac{1}{\mathcal{S}^{\frac{2^*}{2}}} \[- \dfrac{\alpha_0}{\alpha_2} \frac{\mu C_{N,q}^q 2^* \mathcal{S}^{\frac{2^*}{2}}}{q} \]^{\frac{\alpha_2}{\alpha_2 - \alpha_0}}  c^{\frac{\alpha_1\alpha_2}{2(\alpha_2 - \alpha_0)}}\\
	&= \dfrac{1}{2} - K c^{\frac{2}{N}}.
	\end{align*}
	By the definition of $c_0$, we have that $\displaystyle \max_{\rho > 0} g_{c_0}(\rho) = 0$, and hence the lemma follows.
\end{proof}

\begin{lemma}\label{LL6-1}
	Let $(c_1, \rho_1) \in  (0, \infty) \times (0, \infty)$ be such that $f(c_1, \rho_1) \geq 0$. Then for any $c_2 \in (0,c_1]$, we have that
	\begin{align*}
	f(c_2, \rho_2) \geq 0 \quad \mbox{if} \quad \rho_2 \in \[  \displaystyle \frac{c_2}{c_1}\rho_1, \rho_1\].
	\end{align*}
\end{lemma}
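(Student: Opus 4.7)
The plan is to pin down $f(c_2,\rho_2)\ge 0$ at the two endpoints $\rho_2 = \rho_1$ and $\rho_2 = (c_2/c_1)\rho_1$, and then invoke the unimodal structure of $g_{c_2}(\rho)$ established in \cref{lemma:5.1} to fill in the whole interval.

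First I would treat the right endpoint. With $\rho_2 = \rho_1$, only the middle term of $f$ changes when replacing $c_1$ by $c_2$, and since $\alpha_1>0$ and $c_2\le c_1$, one has $c_2^{\alpha_1/2}\le c_1^{\alpha_1/2}$, which gives
\begin{equation*}
f(c_2,\rho_1) - f(c_1,\rho_1) = \frac{\mu}{q}C_{N,q}^{q}\,\rho_1^{\alpha_0/2}\bigl(c_1^{\alpha_1/2}-c_2^{\alpha_1/2}\bigr)\ge 0,
\end{equation*}
so $f(c_2,\rho_1)\ge 0$.

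Next I would treat the left endpoint $\rho_2 = (c_2/c_1)\rho_1$. A direct substitution yields
\begin{equation*}
\rho_2^{\alpha_0/2}\,c_2^{\alpha_1/2} = \Bigl(\tfrac{c_2}{c_1}\Bigr)^{(\alpha_0+\alpha_1)/2}\rho_1^{\alpha_0/2}\,c_1^{\alpha_1/2},\qquad \rho_2^{\alpha_2/2} = \Bigl(\tfrac{c_2}{c_1}\Bigr)^{\alpha_2/2}\rho_1^{\alpha_2/2}.
\end{equation*}
A short computation from the definitions of $\alpha_0$ and $\alpha_1$ gives $\alpha_0+\alpha_1 = q-2 > 0$, and $\alpha_2>0$ by definition. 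Since $c_2/c_1\le 1$, both factors $(c_2/c_1)^{(\alpha_0+\alpha_1)/2}$ and $(c_2/c_1)^{\alpha_2/2}$ are $\le 1$, so both subtracted terms in $f(c_2,\rho_2)$ are no larger than the corresponding terms in $f(c_1,\rho_1)$. Thus $f(c_2,(c_2/c_1)\rho_1)\ge f(c_1,\rho_1)\ge 0$.

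Finally, I would close the argument by a connectedness remark. From the formula for $g_{c_2}'$ in the proof of \cref{lemma:5.1}, $g_{c_2}$ is strictly increasing on $(0,\rho_{c_2})$ and strictly decreasing on $(\rho_{c_2},\infty)$, with $g_{c_2}(\rho)\to -\infty$ at both ends; hence the superlevel set $\{\rho>0 : g_{c_2}(\rho)\ge 0\}$ is a (possibly empty, otherwise closed) interval. The two endpoints $(c_2/c_1)\rho_1$ and $\rho_1$ belong to this interval by the previous two steps, so the whole closed interval $[(c_2/c_1)\rho_1,\rho_1]$ is contained in it, proving the claim. No step is really the main obstacle here; the only delicate point is the sign identity $\alpha_0+\alpha_1 = q-2 > 0$, which is exactly what allows the left endpoint to remain in the nonnegativity region.
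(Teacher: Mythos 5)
Your proof is correct and follows essentially the same route as the paper: nonnegativity at the two endpoints (using the monotonicity in $c$ for the right one and the identity $\alpha_0+\alpha_1=q-2>0$ for the left one), then the unique-critical-point/unimodal structure of $g_{c_2}$ from \cref{lemma:5.1} to propagate nonnegativity to the whole interval. Your phrasing of the last step via the superlevel set being an interval is just a restatement of the paper's ``no interior local minimum'' argument, and your explicit computations fill in what the paper calls ``direct calculations.''
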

\begin{proof}
	Since $c \to f(\cdot, \rho)$ is a non-increasing  function we clearly have that 
	\begin{align}
	f(c_2, \rho_1) \geq f(c_1, \rho_1) \geq 0. \label{eqn:5.20}
	\end{align}
	Now taking into account that $\alpha_0 + \alpha_1 = q-2 > 0 $ we have, by direct calculations, that 
	\begin{align}
	f\(c_2, \dfrac{c_2}{c_1}\rho_1\) \geq f(c_1, \rho_1) \geq 0. \label{eqn:5.21}
	\end{align}
	We observe that if $g_{c_2}(\rho') \geq 0$ and $g_{c_2}(\rho'') \geq 0$ then
	\begin{align}
	f(c_2, \rho) = g_{c_2}(\rho) \geq 0  \quad \mbox{for any} \quad \rho \in [\rho', \rho'']. \label{eqn:5.22}
	\end{align}
	Indeed, if $g_{c_2}(\rho) < 0$ for some $\rho \in (\rho', \rho'')$ then there exists a local minimum point on $ (\rho_1, \rho_2)$ and this contradicts the fact that the function $g_{c_2}(\rho)$  has a unique  critical point
	which has to coincide necessarily with its unique global maximum  (see \cref{lemma:5.1}).
	By \eqref{eqn:5.20}, \eqref{eqn:5.21}, we can choose $\rho' =  (c_2/c_1) \rho_1$ and $\rho'' = \rho_1$, and  \eqref{eqn:5.22} implies the lemma.
\end{proof}

\begin{lemma}\label{Lemma-L1} For any $u \in S(c)$,  we have that
	\begin{equation*}
	F_{\mu}(u) \geq \normLp{\diff u}{2}^2 f(c, \normLp{\diff u}{2}^2).
	\end{equation*}
\end{lemma}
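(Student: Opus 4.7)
The plan is to estimate each of the two nonlinear terms in $F_\mu(u)$ separately, using the Gagliardo-Nirenberg inequality \eqref{Gagliardo-Nirenberg-I} on $\|u\|_q^q$ and the Sobolev inequality \eqref{Sobolev-I} on $\|u\|_{2^*}^{2^*}$, and then observing that the resulting estimates exactly produce the terms that appear in $\|\nabla u\|_2^2\, f(c,\|\nabla u\|_2^2)$.

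More precisely, I would first apply \eqref{Gagliardo-Nirenberg-I} with $p=q$: since $\beta = N(1/2 - 1/q)$, one computes $q\beta = N(q-2)/2 = \alpha_0 + 2$ and $q(1-\beta) = (2N-q(N-2))/2 = \alpha_1$, so that for $u \in S(c)$
\begin{equation*}
\|u\|_q^q \le C_{N,q}^q \|\nabla u\|_2^{\alpha_0+2}\, c^{\alpha_1/2}.
\end{equation*}
Thus $\tfrac{\mu}{q}\|u\|_q^q \le \|\nabla u\|_2^2 \cdot \tfrac{\mu}{q}C_{N,q}^q \|\nabla u\|_2^{\alpha_0}\, c^{\alpha_1/2}$, which matches the second term of $\|\nabla u\|_2^2\, f(c,\|\nabla u\|_2^2)$.

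Similarly, from \eqref{Sobolev-I} one has $\|u\|_{2^*}^{2^*} \le \mathcal{S}^{-2^*/2}\|\nabla u\|_2^{2^*}$, and the identity $2^* = \alpha_2 + 2$ (since $\alpha_2 = 4/(N-2)$) gives
\begin{equation*}
\tfrac{1}{2^*}\|u\|_{2^*}^{2^*} \le \|\nabla u\|_2^2 \cdot \tfrac{1}{2^*}\mathcal{S}^{-2^*/2}\|\nabla u\|_2^{\alpha_2},
\end{equation*}
which matches the third term of $\|\nabla u\|_2^2\, f(c,\|\nabla u\|_2^2)$.

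Combining the two bounds with the explicit kinetic term $\tfrac{1}{2}\|\nabla u\|_2^2$ in $F_\mu(u)$ and factoring $\|\nabla u\|_2^2$ yields the claim. There is essentially no obstacle here; the only thing to verify carefully is the bookkeeping of the exponents $\alpha_0, \alpha_1, \alpha_2$, which have been designed precisely so that the powers of $\|\nabla u\|_2$ and $c$ produced by Gagliardo-Nirenberg and Sobolev agree with those in the definition of $f(c,\rho)$ at $\rho=\|\nabla u\|_2^2$.
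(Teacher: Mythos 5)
Your proposal is correct and follows exactly the paper's own argument: apply the Gagliardo--Nirenberg inequality \eqref{Gagliardo-Nirenberg-I} to the $L^q$ term and the Sobolev inequality \eqref{Sobolev-I} to the $L^{2^*}$ term, then factor out $\normLp{\diff u}{2}^2$; the exponent identities $q\beta=\alpha_0+2$, $q(1-\beta)=\alpha_1$ and $2^*=\alpha_2+2$ are exactly what makes the remaining bracket equal to $f(c,\normLp{\diff u}{2}^2)$. Nothing is missing.
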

\begin{proof}
	Applying the Gagliardo-Nirenberg inequality \eqref{Gagliardo-Nirenberg-I}
	and the Sobolev inequality \eqref{Sobolev-I} we obtain that, for any $u \in S(c)$,
	\begin{align*}
	F_{\mu}(u) &= \frac{1}{2} \normLp{\diff u}{2}^2 - \frac{\mu}{q} \normLp{u}{q}^q - \frac{1}{2^*}\normLp{u}{2^*}^{2^*} 
	\geq \frac{1}{2} \normLp{\diff u}{2}^2 - \frac{\mu}{q} C_{N,q}^q \norm{\diff u}_{2}^{\alpha_0+2} \norm{u}_{2}^{\alpha_1} - \frac{1}{2^*} \dfrac{1}{\mathcal{S}^{\frac{2^*}{2}}} \normLp{\diff u}{2}^{2^*} \\
	&= \normLp{\diff u}{2}^2 \[ \frac{1}{2} - \frac{\mu}{q} C_{N,q}^q \norm{\diff u}_{2}^{\alpha_0 } \norm{u}_{2}^{\alpha_1} - \frac{1}{2^*} \dfrac{1}{\mathcal{S}^{\frac{2^*}{2}}} \normLp{\diff u}{2}^{\alpha_2}\]
	= \normLp{\diff u}{2}^2 f( \norm{u}_{2}^2, \normLp{\diff u}{2}^2). 
	\end{align*}
	The lemma is proved.
\end{proof}

Now let $c_0>0$ be given by \eqref{eqn:5.14}  and $\rho_0 := \rho_{c_0} >0$ being determined by \eqref{maxL}. Note that by \cref{lemma:5.1} and \cref{LL6-1}, we have that $f(c_0, \rho_0) = 0$ and $f(c, \rho_0) > 0$ for all $c \in (0, c_0)$. 
We define
\begin{align*}
B_{\rho_0} := \{u \in \Hoc: \normLp{\diff u}{2}^2 < \rho_0\} \quad \mbox{and} \quad V(c) := S(c) \cap B_{\rho_0}.
\end{align*}
We shall now consider the following local minimization problem:  for any $c \in (0, c_0)$,
\begin{equation}\label{L6-3}
m(c) := \inf_{u \in V(c)} F_{\mu}(u).
\end{equation}

\begin{lemma}\label{Lemma-structure} For any $c \in (0, c_0)$, the following properties hold, 
	\begin{enumerate}[label=(\roman*), ref = \roman*]
		\item\label{point:5L.5i} 
		$$m(c) = \inf_{u \in V(c)} F_{\mu}(u) < 0 <  \inf_{u \in \partial V(c)}F_{\mu}(u).$$
		\item\label{point:5L.5ii} If $m(c)$ is reached, then any ground state is contained in $V(c)$.
	\end{enumerate}
\end{lemma}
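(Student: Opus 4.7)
My plan for (i) is to show $m(c) < 0$ via a test function and to establish positivity on $\partial V(c)$ via \cref{Lemma-L1}. For the first part, fix any $u_0 \in V(c)$ and apply the $L^2$-preserving scaling $u_0^t(x) := t^{N/2} u_0(tx)$; then $\normLp{\diff u_0^t}{2}^2 = t^2 \normLp{\diff u_0}{2}^2$, so $u_0^t \in V(c)$ for $t$ small, and
\begin{equation*}
F_{\mu}(u_0^t) = \frac{t^2}{2}\normLp{\diff u_0}{2}^2 - \frac{\mu t^{N(q-2)/2}}{q}\normLp{u_0}{q}^q - \frac{t^{2^*}}{2^*}\normLp{u_0}{2^*}^{2^*}.
\end{equation*}
Since $N(q-2)/2 \in (0,2) \subset (0, 2^*)$, the $L^q$ term dominates for small $t$, giving $F_\mu(u_0^t) < 0$ and hence $m(c) < 0$. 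For positivity on $\partial V(c)$, \cref{Lemma-L1} gives $F_\mu(u) \geq \rho_0 f(c, \rho_0)$ for every $u \in \partial V(c)$. By the choice $\rho_0 = \rho_{c_0}$, \cref{lemma:5.1} yields $f(c_0, \rho_0) = 0$, and the explicit formula for $f$ shows $c \mapsto f(c, \rho_0)$ is strictly decreasing; hence $f(c, \rho_0) > 0$ for $c \in (0, c_0)$ and $\inf_{\partial V(c)} F_\mu \geq \rho_0 f(c, \rho_0) > 0$.

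For (ii), since $m(c)$ is attained by some $u^* \in \overline{V(c)}$, part (i) forces $u^* \in V(c)$; as a local minimum on $S(c)$, $u^*$ is a constrained critical point, so by the very definition of ground state any ground state $v$ satisfies $F_\mu(v) \leq F_\mu(u^*) = m(c) < 0$. Suppose for contradiction $v \notin V(c)$: the case $\normLp{\diff v}{2}^2 = \rho_0$ is immediately ruled out by part (i), while in the remaining case $\normLp{\diff v}{2}^2 > \rho_0$, setting $v_t(x) := t^{N/2} v(tx)$ one finds a unique $t_* \in (0,1)$ with $v_{t_*} \in \partial V(c)$, whence $\psi(t_*) > 0$, where $\psi(t) := F_\mu(v_t)$.

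The main obstacle is to contradict $\psi(t_*) > 0$ for this $t_* \in (0,1)$, using $\psi(1) < 0$ together with the fact that $v$ is a constrained critical point, which provides the Pohozaev-type identity $\psi'(1) = 0$ (since $t \mapsto v_t$ stays on $S(c)$). Writing $\psi(t) = \tfrac{A}{2}t^2 - \tfrac{\mu B}{q}t^{N(q-2)/2} - \tfrac{C}{2^*}t^{2^*}$ with $A, B, C > 0$, the factorization $\psi'(t)/t = A - G(t)$, where $G$ blows up at both $0$ and $\infty$ with a unique minimum, shows that $\psi'$ has at most two positive zeros, leaving two possible profiles for $\psi$: either monotone non-increasing on $(0,\infty)$, or decreasing then increasing then decreasing with critical points $t_1 < t_2$ satisfying $\{t_1,t_2\} \ni 1$. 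In every case, combining $\psi(0^+) = 0$, the sign $\psi(t) < 0$ for small $t > 0$, and $\psi(1) < 0$, a short case analysis on the location of $1$ within the profile yields $\psi(t) \leq 0$ for all $t \in (0, 1]$, contradicting $\psi(t_*) > 0$ and proving $v \in V(c)$.
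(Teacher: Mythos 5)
Your proof is correct and follows essentially the same route as the paper: part (i) via the mass-preserving scaling together with \cref{Lemma-L1} and the monotonicity of $c\mapsto f(c,\rho_0)$, and part (ii) via the Pohozaev identity $\psi'(1)=0$ combined with the fact that $\psi'$ has at most two positive zeros. The only cosmetic difference is that you run the endgame of (ii) as a direct contradiction on the sign of $\psi$ over $(0,1]$, whereas the paper identifies the ground state with the first critical point of the normalized fiber map; the ingredients and the key computation ($\psi'(s)/s$ being of the form $A-G(s)$ with $G$ having a unique minimum) are identical.
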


\begin{proof}
	(\ref{point:5L.5i}) For any $u \in \partial V(c)$ we have $\normLp{\diff u}{2}^2 = \rho_0$. Thus, using \cref{Lemma-L1}, we get
	\begin{align*}
	F_{\mu}(u)  \geq \normLp{\diff u}{2}^2 f( \norm{u}_{2}^2, \normLp{\diff u}{2}^2)
	= \rho_0 f(c, \rho_0) >0.  
	\end{align*}
	Now let $u \in S(c)$ be arbitrary but fixed. For $s \in (0, \infty)$ we set
	\begin{align*}
	u_s(x) := s^{\frac{N}{2}} u(s x).
	\end{align*}
	Clearly $u_s \in S(c)$ for any $s \in (0, \infty)$. We define on $(0, \infty)$ the map,
	\begin{align*}
	\psi_u(s) := F_{\mu} (u_s) = \dfrac{s^2}{2} \normLp{\diff u}{2}^2 - \dfrac{\mu}{q} s^{\frac{N(q-2)}{2}} \normLp{u}{q}^q - \dfrac{s^{2^*}}{2^*} \normLp{u}{2^*}^{2^*}.
	\end{align*}
	Taking into account that
	\begin{align*}
	\dfrac{N(q-2)}{2} < 2 \qquad \mbox{and} \qquad 2^* > 2, 
	\end{align*}
	we see that $\psi_{u}(s) \to 0^-$, as $s \to 0$. Therefore, there exists $s_0 >0$ small enough such that $\normLp{\diff (u_{s_0})}{2}^2 = s_0^2 \normLp{\diff u}{2}^2 < \rho_0$ and $F_{\mu} (u_{s_0}) = \psi_{\mu}(s_0) < 0$. This implies that $m(c) < 0$.
	
	(\ref{point:5L.5ii}) It is well known, see for example \cite[Lemma 2.7]{JEANJEAN1997}, that all critical points of $F_{\mu}$ restricted to $S(c)$ belong to the Pohozaev's type set 
	$$\mathcal{P}_c := \{ u \in S(c) : \mathcal{P}(u)=0 \}$$
	where
	\begin{equation*}
	\mathcal{P}(u):= \normLp{\diff u}{2}^2 - \dfrac{\mu N(q-2)}{2q} \normLp{u}{q}^q - \normLp{u}{2^*}^{2^*}.
	\end{equation*}
	Also a direct calculation shows that, for any $v \in S(c)$ and any $s \in (0, \infty)$, 
	\begin{equation}\label{link}
	\psi_v'(s) = \frac{1}{s}\mathcal{P}(v_s).
	\end{equation}
	Here $\psi_v'$ denotes the derivative of $\psi_v$ with respect to $s \in (0, \infty)$. Finally, observe that any $u \in S(c)$ can be written as $u = v_s$ with $v \in S(c)$, $||\nabla v||_2 =1$ and $s \in (0, \infty).$
	
Since the set $\mathcal{P}_c$ contains all the ground states (if any), we deduce from \eqref{link} that if $w \in S(c)$ is a ground state there exists a $v \in S(c)$, $||\nabla v||_2^2=1$ and a $s_0 \in (0, \infty)$ such that $w= v_{s_0}$, $F_{\mu}(w) = \psi_v(s_0)$ and $\psi_{v}'(s_0)=0$. Namely, $s_0 \in (0, \infty)$ is a zero of the function $\psi_{v}'.$
	
Now, since $\psi_v(s) \to 0^-$, $||\nabla v_s||_2 \to 0,$ as $s \to 0$ and $\psi_v(s) = F_{\mu}(v_s) \geq 0$ when $v_s \in \partial V(c) = \{u \in S(c) : ||\nabla u||_2^2 = \rho_0 \}$, necessarily $\psi_v'$ has a first zero 
$s_1 >0$ corresponding to a local minima. In particular,  $v_{s_1} \in V(c)$ and $F(v_{s_1}) = \psi_v(s_1) <0.$ Also, from $ \psi_v(s_1) <0,$ $\psi_v(s) \geq  0$ when $v_s \in \partial V(c)$ and $\psi_v(s) \to - \infty$ as $s \to  \infty$,  $\psi_v$ has a second zero $s_2 >s_1$ corresponding to a local maxima of $\psi_v$. Since $v_{s_2}$ satisfies $F(v_{s_2})= \psi_v(s_2) \geq 0$, we have that $m(c) \leq F(v_{s_1}) < F(v_{s_2})$. In particular, since $m(c)$ is reached, $v_{s_2}$ cannot be a ground state.

To conclude the proof of (ii) it then just suffices to show that $\psi_v'$ has at most two zeros, since this will imply $s_0 = s_1$ and $w = v_{s_0} = v_{s_1} \in V(c)$. However, this is equivalent to showing that the function
	$$s \mapsto \frac{\psi'_u(s)}{s}$$
	has at most two zeros. We have
	$$\theta(s) := \frac{\psi'_u(s)}{s} = \normLp{\diff u}{2}^2 - \dfrac{\mu N(q-2)}{2q} s^{\alpha_0} \normLp{u}{q}^q - s^{\alpha_2} \normLp{u}{2^*}^{2^*}$$
	and
	\begin{align*}
	\theta'(s) = - \alpha_0 \dfrac{\mu N(q-2)}{2q} s^{\alpha_0 - 1} \normLp{u}{q}^q - \alpha_2 s^{\alpha_2-1} \normLp{u}{2^*}^{2^*}.
	\end{align*}
	Since $\alpha_0 <0$ and $\alpha_2 >0$, the equation $\theta'(s) =0$ has a unique solution, and $\theta(s)$ has indeed at most two zeros.
\end{proof}

We now introduce the set
\begin{align}\label{set-stable}
\mathcal{M}_c := \{u \in V(c) : F_{\mu}(u) = m(c)\}.
\end{align}
 The main aim of this section is the following result.
\begin{theorem} \label{theorem:LT-L}
	For any $c \in (0, c_0)$, if $(u_n) \subset B_{\rho_0}$ is such that $\normLp{u_n}{2}^2 \to c$ and $F_{\mu}(u_n) \to m(c)$ then, up to translation,  $u_n \overset{ H}\to u \in \mathcal{M}_c$. In particular the set $\mathcal{M}_c$ is compact in $H$, up to translation.
\end{theorem}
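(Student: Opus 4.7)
The plan is the classical concentration-compactness route, adapted to the Sobolev-critical setting by exploiting the structural information in \cref{Lemma-L1} and \cref{lemma:5.1}. First, $(u_n)$ is bounded in $H$ since $\|\nabla u_n\|_2^2<\rho_0$ and $\|u_n\|_2^2\to c$. To rule out vanishing (in the sense of Lions), observe that if $\sup_{y\in\R^N}\int_{B(y,1)}|u_n|^2\to 0$, then by Lions' lemma, applied at the strictly Sobolev-subcritical exponent $q$, one has $\|u_n\|_q\to 0$, so using the Sobolev inequality \eqref{Sobolev-I} to bound $\|u_n\|_{2^*}^{2^*}$,
\[
F_\mu(u_n)\ge \|\nabla u_n\|_2^2\Bigl[\tfrac{1}{2}-\tfrac{1}{2^*\mathcal S^{2^*/2}}\|\nabla u_n\|_2^{\alpha_2}\Bigr]+o(1).
\]
The identity $f(c_0,\rho_0)=0$ from \cref{lemma:5.1} rewrites as $\tfrac12-\tfrac{1}{2^*\mathcal S^{2^*/2}}\rho_0^{\alpha_2/2}=\tfrac{\mu}{q}C_{N,q}^q\rho_0^{\alpha_0/2}c_0^{\alpha_1/2}>0$, so the bracket above is bounded away from $0$ on $[0,\rho_0]$. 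Since $\|\nabla u_n\|_2^2<\rho_0$, we obtain $\liminf F_\mu(u_n)\ge 0$, contradicting $F_\mu(u_n)\to m(c)<0$. Hence, after a subsequence and translations, $\tilde u_n(x):=u_n(x+y_n)\rightharpoonup u\ne 0$ in $H$.

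Writing $v_n:=\tilde u_n-u$, weak convergence combined with the Brezis--Lieb lemma (applied at the $L^q$ and $L^{2^*}$ norms) yields the splitting
\[
F_\mu(\tilde u_n)=F_\mu(u)+F_\mu(v_n)+o(1),\qquad \|\tilde u_n\|_2^2=\|u\|_2^2+\|v_n\|_2^2+o(1),
\]
together with $\|\nabla\tilde u_n\|_2^2=\|\nabla u\|_2^2+\|\nabla v_n\|_2^2+o(1)$. Put $d:=\|u\|_2^2\in(0,c]$ and $e:=c-d\ge 0$. The boundary case $\|\nabla u\|_2^2=\rho_0$ is quickly ruled out, as it would force $\|\nabla v_n\|_2\to 0$, hence $\|v_n\|_p\to 0$ for $p>2$ and $F_\mu(v_n)\to 0$, while \cref{Lemma-structure}\,(i) gives $F_\mu(u)>0$, contradicting $F_\mu(\tilde u_n)\to m(c)<0$. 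Therefore $u\in V(d)$ and $F_\mu(u)\ge m(d)$; similarly $v_n\in V(\|v_n\|_2^2)$ for large $n$, and continuity of $c\mapsto m(c)$ (verified by an elementary rescaling) yields $\liminf F_\mu(v_n)\ge m(e)$. Combining,
\[
m(c)\ge m(d)+m(e).
\]

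The central---and hardest---step is now to exclude $e>0$. I would derive a contradiction from the strict subadditivity
\[
m(d)+m(e)>m(d+e)=m(c),\qquad d,e\in(0,c).
\]
The key scaling lemma is the following: for any $w\in V(c_1)$ with $c_1<c_2<c_0$, the rescaling $W(x):=\lambda w(\sigma x)$ with $\sigma:=\sqrt{c_1/c_2}$ and $\lambda^{2}:=(c_2/c_1)^{1-N/2}$ lies in $V(c_2)$, leaves the gradient $\|\nabla W\|_2^2=\|\nabla w\|_2^2$ and the critical norm $\|W\|_{2^*}^{2^*}=\|w\|_{2^*}^{2^*}$ invariant, and multiplies $\|w\|_q^q$ by the strict factor $(c_2/c_1)^{\alpha_1/2}>1$ (here $\alpha_1>0$ precisely because $q<2+\tfrac{4}{N}$). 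Therefore
\[
F_\mu(W)=F_\mu(w)-\tfrac{\mu}{q}\bigl[(c_2/c_1)^{\alpha_1/2}-1\bigr]\|w\|_q^q<F_\mu(w).
\]
Applied to (near-)minimizers at masses $d$ and $e$ and rearranged, this delivers the desired strict subadditivity. The Sobolev-critical nature enters visibly here: the $L^{2^*}$-term is \emph{invariant} under this particular rescaling, so the strict gain comes solely from the subcritical $L^q$-term.

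Once $d=c$ is established, $\|v_n\|_2\to 0$, and \eqref{Gagliardo-Nirenberg-I} combined with boundedness of $\|\nabla v_n\|_2$ gives $\|v_n\|_q\to 0$. Reapplying the vanishing-type bound to $v_n$,
\[
F_\mu(v_n)\ge\|\nabla v_n\|_2^2\Bigl[\tfrac12-\tfrac{1}{2^*\mathcal S^{2^*/2}}\rho_0^{\alpha_2/2}\Bigr]+o(1),
\]
with the bracket positive. The splitting together with $F_\mu(u)\ge m(c)$ (since $u\in V(c)$) forces $\lim F_\mu(v_n)\le 0$, and comparison with the positive lower bound gives $\|\nabla v_n\|_2\to 0$. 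Therefore $\tilde u_n\to u$ in $H$ and $F_\mu(u)=m(c)$, so $u\in\mathcal M_c$. The compactness of $\mathcal M_c$ up to translation then follows by applying the statement to any sequence in $\mathcal M_c$.
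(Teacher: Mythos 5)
Your overall architecture (rule out vanishing via Lions' lemma plus the Sobolev bound, Brezis--Lieb splitting, exclude dichotomy by strict subadditivity, then upgrade $L^2$-smallness of the remainder to $H^1$-smallness via the coercivity estimate of \cref{lemma:5.6}) is exactly the paper's, and the first, second and final steps are carried out correctly. The gap is in the step you yourself flag as central: the proof of strict subadditivity. Your rescaling $W(x)=\lambda w(\sigma x)$ does what you say it does --- it maps $V(c_1)$ into $V(c_2)$, preserves $\|\nabla\cdot\|_2^2$ and $\|\cdot\|_{2^*}^{2^*}$, and strictly increases $\|\cdot\|_q^q$ --- but the inequality it yields is $m(c_2)\le m(c_1)-\delta$ with some $\delta>0$, i.e.\ \emph{strict monotonicity} of $c\mapsto m(c)$. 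Strict monotonicity does not imply strict subadditivity when $m<0$: you would need $m(d+e)-m(d)< m(e)$, i.e.\ the decrease of $m$ between $d$ and $d+e$ must exceed $|m(e)|$, and nothing in your lemma bounds $\delta$ from below by $|m(e)|$. (The function $m(t)=-\sqrt{t}$ is strictly decreasing and negative, yet strictly \emph{super}additive.) No ``rearrangement'' can repair this, because your scaling compares $m$ at two masses additively, whereas subadditivity requires the multiplicative, sub-homogeneity-type comparison.

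What does work, and what the paper does, is to prove $m(\theta\alpha)\le\theta m(\alpha)$ for $\theta\in(1,c/\alpha]$ via the amplitude scaling $v=\sqrt{\theta}\,u$, which multiplies the quadratic term by $\theta$ and the two superquadratic terms by $\theta^{q/2}>\theta$ and $\theta^{2^*/2}>\theta$; subadditivity then follows from $m(c)=\frac{d}{c}m(c)+\frac{e}{c}m(c)\le m(d)+m(e)$. That scaling does not preserve the gradient norm, so one must check $\theta\|\nabla u\|_2^2<\rho_0$; this is where \cref{LL6-1} and \cref{Lemma-L1} enter, giving the refined bound $\|\nabla u\|_2^2<\frac{\alpha}{c}\rho_0$ for near-minimizers with negative energy (a point your gradient-preserving scaling conveniently avoids, but at the cost of proving the wrong inequality). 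Note also that the paper obtains \emph{strict} sub-homogeneity only when $m(\alpha)$ is attained, and compensates in the main argument by a case distinction on whether $F_\mu(u_c)=m(c_1)$ or $F_\mu(u_c)>m(c_1)$; if you insist on unconditional strict subadditivity, it is true, but must be derived through the sub-homogeneity route with the gap quantified by $\liminf_n\|u_n\|_q^q>0$ along minimizing sequences (a consequence of \cref{lemma:5.6} and $m(\alpha)<0$). A minor side remark: $\alpha_1>0$ because $q<2^*$, not because $q<2+\frac4N$; the latter is what forces $\alpha_0<0$.
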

\cref{theorem:LT-L} will both imply the existence of a ground state but also, as it may be expected, will be a crucial step to derive the orbital stability of the set $\mathcal{M}_c$.\\

In order to prove \cref{theorem:LT-L} we collect some properties of $m(c)$ defined in \eqref{L6-3}.
\begin{lemma} \label{lemma:5.5}
	It holds that
	\begin{enumerate}[label=(\roman*), ref = \roman*]
		\item\label{point:5.5ii} $c \in (0,c_0) \mapsto m(c)$ is a continuous mapping.
		\item\label{point:5.5iii} Let $c \in (0,c_0)$. We have for all $\alpha \in (0,c)$ : $m(c) \leq m(\alpha) + m(c-\alpha)$ and if $m(\alpha)$ or $m(c-\alpha)$ is reached  then  the inequality is strict.
	\end{enumerate}
\end{lemma}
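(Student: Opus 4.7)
My plan is to prove both parts through scaling arguments tailored to the interior local-minimization structure of $V(c)$.

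For (i), let $c_n \to c$ in $(0, c_0)$. I would establish upper and lower semicontinuity separately through the amplitude scaling $u \mapsto \sqrt{c_n/c}\,u$, which sends $S(c)$ to $S(c_n)$ and acts continuously on $F_\mu$. For upper semicontinuity, I pick $u \in V(c)$ with $F_\mu(u) < m(c) + \varepsilon$ and $\|\nabla u\|_2^2$ strictly below $\rho_0$; then $\sqrt{c_n/c}\,u$ lies in $V(c_n)$ for $n$ large (the gradient converges to $\|\nabla u\|_2^2 < \rho_0$) and $F_\mu(\sqrt{c_n/c}\,u) \to F_\mu(u)$. For lower semicontinuity, take $v_n \in V(c_n)$ with $F_\mu(v_n) \le m(c_n) + 1/n$; the upper bound already proved forces $F_\mu(v_n) < 0$ eventually, and \cref{Lemma-L1} together with the sign structure of $g_{c_n}$ from \cref{lemma:5.1,LL6-1} imposes $\|\nabla v_n\|_2^2 < \rho_{c_n}^-$, where $\rho_{c_n}^-$ is the smaller positive zero of $g_{c_n}$. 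Since $\rho_{c_n}^- \to \rho_c^- < \rho_0$, this uniform separation from $\rho_0$ allows the backward scaling $\sqrt{c/c_n}\,v_n$ to lie in $V(c)$ and transport the energy to $m(c_n) + o(1)$, yielding $m(c) \le \liminf m(c_n)$.

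For (ii), the central tool is the rescaling operator
\[
(T_{a \to b}u)(x) := (a/b)^{(N-2)/4}\, u\bigl((a/b)^{1/2}\, x\bigr),
\]
which sends $V(a)$ into $V(b)$: it preserves both $\|\nabla u\|_2^2$ and $\|u\|_{2^*}^{2^*}$, while changing the mass from $a$ to $b$ and scaling $\|u\|_q^q$ by $(b/a)^{\alpha_1/2}$. A direct computation gives
\[
F_\mu(T_{a \to b}u) = F_\mu(u) - \frac{\mu}{q}\bigl[(b/a)^{\alpha_1/2} - 1\bigr]\|u\|_q^q,
\]
a strict decrease whenever $b > a$ and $u \not\equiv 0$. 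For the non-strict bound $m(c) \le m(\alpha) + m(c-\alpha)$, I would concatenate near-minimizers $u_n \in V(\alpha)$, $v_n \in V(c-\alpha)$ as $w_n := u_n + v_n(\cdot - y_n)$ with $|y_n| \to \infty$; asymptotic decoupling of the norms yields $\|w_n\|_2^2 \to c$, $F_\mu(w_n) \to m(\alpha) + m(c-\alpha)$, and $\|\nabla w_n\|_2^2 \to \|\nabla u_n\|_2^2 + \|\nabla v_n\|_2^2$. A small amplitude correction brings $\|w_n\|_2^2$ exactly to $c$ without affecting the energy limit, and the gradient bound $\rho_\alpha^- + \rho_{c-\alpha}^- < \rho_0$ (which follows from the explicit structure of $g_c$ and the superlinear growth $\rho_c^- \sim c^{\alpha_1/|\alpha_0|}$ for small $c$, with exponent strictly greater than $1$) keeps the corrected $w_n$ inside $V(c)$. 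For the strict inequality when $m(\alpha)$ is attained at some $u_\alpha$, I exploit $\|u_\alpha\|_q^q > 0$: applying $T_{\alpha \to \alpha + \delta}$ for small $\delta > 0$ yields a strict energy gain of size $\gamma(\delta) > 0$, and concatenating with a near-minimizer of $m(c - \alpha - \delta)$ together with the continuity from (i) produces a competitor with energy strictly below $m(\alpha) + m(c-\alpha)$.

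The main obstacle is the verification of $\rho_\alpha^- + \rho_{c-\alpha}^- < \rho_0$, a superadditivity property of $c \mapsto \rho_c^-$ that must be extracted from the defining equation of $g_c$. An alternative that bypasses this bookkeeping is to first prove strict monotonicity of $c \mapsto m(c)/c$ on $(0, c_0)$ via the $T$-rescaling applied to near-minimizers, and then deduce strict subadditivity from the identity $\alpha\cdot \tfrac{m(c)}{c} + (c-\alpha)\cdot \tfrac{m(c)}{c} = m(c)$ combined with the monotonicity (using $m < 0$ throughout), which gives $m(c) < m(\alpha) + m(c-\alpha)$ with no concatenation at all.
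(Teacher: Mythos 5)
Your part (i) is essentially the paper's argument: both directions are obtained by the amplitude rescaling $u \mapsto \sqrt{c_n/c}\,u$, with \cref{Lemma-L1} and \cref{LL6-1} used to keep the rescaled functions inside the ball $B_{\rho_0}$. One remark: you do not need the smaller zero $\rho_{c_n}^-$ of $g_{c_n}$ at all. Since $g_{c_n}\ge 0$ on $[\tfrac{c_n}{c}\rho_0,\rho_0]$ by \cref{LL6-1} and $F_\mu(v_n)<0$ forces $g_{c_n}(\|\nabla v_n\|_2^2)<0$ by \cref{Lemma-L1}, while $\|\nabla v_n\|_2^2<\rho_0$, one gets directly $\|\nabla v_n\|_2^2<\tfrac{c_n}{c}\rho_0$, which is exactly what the backward rescaling needs. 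The same observation repairs the weakest point of your concatenation argument for the non-strict inequality in (ii): any $u\in V(\beta)$ with $F_\mu(u)<0$ satisfies $\|\nabla u\|_2^2<\tfrac{\beta}{c_0}\rho_0$, so the two gradients sum to less than $\tfrac{c}{c_0}\rho_0<\rho_0$; no superadditivity of $c\mapsto\rho_c^-$ and no small-$c$ asymptotics (which in any case would not cover all $\alpha\in(0,c)$) are required. With that fix the concatenation route to $m(c)\le m(\alpha)+m(c-\alpha)$ is workable, but note that the paper avoids concatenation entirely: it proves $m(\theta\alpha)\le\theta m(\alpha)$ for $\theta\in(1,c/\alpha]$ and then writes $m(c)=\tfrac{c-\alpha}{c}m(c)+\tfrac{\alpha}{c}m(c)\le m(c-\alpha)+m(\alpha)$, which is shorter and delivers the strict case for free.

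The genuine gap is in your strict inequality. Your primary argument transfers mass $\delta$ onto the attained minimizer $u_\alpha$ and gains $\gamma(\delta)=\tfrac{\mu}{q}\bigl[(1+\delta/\alpha)^{\alpha_1/2}-1\bigr]\|u_\alpha\|_q^q = O(\delta)$, but the other bubble then loses: the very subadditivity structure you are proving gives $m(c-\alpha-\delta)\ge\tfrac{c-\alpha-\delta}{c-\alpha}m(c-\alpha)$, i.e. $m(c-\alpha-\delta)-m(c-\alpha)\ge\tfrac{\delta}{c-\alpha}\,\abs{m(c-\alpha)}>0$, which is also of order $\delta$. Continuity of $m$ cannot decide which first-order term wins, and there is no reason for $\tfrac{\mu\alpha_1}{2q\alpha}\|u_\alpha\|_q^q$ to exceed $\abs{m(c-\alpha)}/(c-\alpha)$ in general. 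Your fallback — strict monotonicity of $c\mapsto m(c)/c$ — is indeed the right statement (it is exactly the paper's $m(\theta\alpha)\le\theta m(\alpha)$, strict when $m(\alpha)$ is attained), but it cannot be extracted from the operator $T_{a\to b}$: since $T_{a\to b}$ preserves $\|\nabla u\|_2^2$ and $\|u\|_{2^*}^{2^*}$ and gains only $\tfrac{\mu}{q}\bigl[(b/a)^{\alpha_1/2}-1\bigr]\|u\|_q^q$ with $\alpha_1<2$, it yields $m(b)<m(a)$ but falls short of $m(b)\le\tfrac{b}{a}m(a)$ (the required gain $(\tfrac{b}{a}-1)\abs{m(a)}$ is generically larger). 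The scaling that works is the plain amplitude scaling $v=\sqrt{\theta}\,u$: then $F_\mu(v)<\theta F_\mu(u)$ termwise because $\theta^{q/2}>\theta$ and $\theta^{2^*/2}>\theta$, and \cref{LL6-1} guarantees $\theta\|\nabla u\|_2^2<\theta\tfrac{\alpha}{c}\rho_0\le\rho_0$, so $v\in V(\theta\alpha)$. That is precisely the paper's proof, and it is the missing ingredient here.
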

\begin{proof}
	(\ref{point:5.5ii}) Let $c \in (0, c_0)$ be arbitrary and $(c_n) \subset (0, c_0)$ be such that $c_n \to c$. From the definition of $m(c_n)$ and since $m(c_n) <0$, see \cref{Lemma-structure} (\ref{point:5L.5i}),
	for any $\varepsilon >0$ sufficiently small, there exists $u_n \in V(c_n)$ such that
	\begin{align}
	F_{\mu}(u_n) \leq m(c_n) + \varepsilon \quad \mbox{and} \quad F_{\mu}(u_n) <0. \label{eqn:5.9}
	\end{align}
	We set $\displaystyle y_n := \sqrt{\frac{c}{c_n}} u_n$ and hence $y_n \in S(c)$. We have that $y_n \in V(c)$. Indeed, if $c_n \geq c$, then
	\begin{align*}
	\normLp{\diff y_n}{2}^2 = \frac{c}{c_n} \normLp{\diff u_n}{2}^2 \leq \normLp{\diff u_n}{2}^2 < \rho_0.
	\end{align*}
	If $c_n < c$, by \cref{LL6-1}, we have $\displaystyle f(c_n, \rho) \geq 0$ for any $\rho \in \[ \dfrac{c_n}{c} \rho_0, \rho_0\]$. Hence, we deduce from \cref{Lemma-L1} and \eqref{eqn:5.9} that $f(c_n, \|\nabla u_n\|_2^2) < 0$, thus $\|\nabla u_n\|_2^2 < \frac{c_n}{c}\rho_0$ and
	\begin{align*}
	\normLp{\diff y_n}{2}^2 = \frac{c}{c_n} \normLp{\diff u_n}{2}^2 < \frac{c}{c_n} \dfrac{c_n}{c} \rho_0 = \rho_0.
	\end{align*}
	Since $y_n \in V(c)$ we can write
	\begin{align*}
	m(c) \leq F_{\mu}(y_n) = F_{\mu}(u_n) + [F_{\mu}(y_n) - F_{\mu}(u_n)] 
	\end{align*}
	where
	\begin{equation*}
	F_{\mu}(y_n) - F_{\mu}(u_n) = - \frac{1}{2}(\frac{c}{c_n}-1)\normLp{\diff u_n}{2}^2 - \frac{\mu}{q} \big[ (\frac{c}{c_n})^\frac q2 - 1 \big]  \normLp{u_n}{q}^q -  \frac{1}{2^*} [ (\frac{c}{c_n})^{ \frac{2^*}2} - 1 ] \normLp{u_n}{2^*}^{2^*}.
	\end{equation*}
	Since $\normLp{\diff u_n}{2}^2 < \rho_0$, also $\normLp{u_n}{q}^q$ and $\normLp{u_n}{2^*}^{2^*}$ are uniformly bounded. 
	Thus, as $n \to \infty$ we have
	\begin{align}
	m(c) \leq F_{\mu}(y_n) = F_{\mu}(u_n) + o_n(1).  \label{eqn:5.10}
	\end{align}
	Combining \eqref{eqn:5.9} and \eqref{eqn:5.10}, we get
	\begin{align*}
	m(c) \leq m(c_n) + \varepsilon + o_n(1).
	\end{align*}
	Now, let $u \in V(c)$ be such that
	\begin{equation*}
	F_{\mu}(u) \leq m(c) + \varepsilon \quad \mbox{and} \quad F_{\mu}(u) <0. 
	\end{equation*}
	Set $u_n := \sqrt{\frac{c_n}{c}} u$ and hence $u_n \in S(c_n)$.
	Clearly, $\|\nabla u\|_2^2 < \rho_0$ and $c_n \to c$ imply $\|\nabla u_n\|_2^2 < \rho_0$ for $n$ large enough,
	so that $u_n \in V(c_n)$. Also, $F_{\mu}(u_n) \to F_\mu(u)$. We thus have
	\begin{equation*}
	m(c_n) \leq F_\mu(u_n) = F_\mu(u) + [F_{\mu}(u_n) - F_{\mu}(u)] \leq m(c) + \varepsilon + o_n(1).
	\end{equation*}

	Therefore, since $\varepsilon > 0$ is arbitrary, we deduce that $m(c_n) \to m(c)$. The point \eqref{point:5.5ii} follows.
	
	(\ref{point:5.5iii}) Note that, fixed $\alpha \in (0,c)$, it is sufficient to prove that the following holds
	\begin{equation}\label{L6-4}
	\forall \theta \in \(1, \frac{c}{\alpha}\] :  m(\theta \alpha) \leq \theta m(\alpha)
	\end{equation}
	and that, if $m(\alpha)$ is reached, the inequality is strict. Indeed, if \eqref{L6-4} holds then we have
	\begin{align*}
	m(c)=\frac{c-\alpha}{c}m(c)+\frac{\alpha}{c}m(c)=\frac{c-\alpha}{c}m\left( \frac{c}{c-\alpha}(c-\alpha) \right)+\frac{\alpha}{c}m\left( \frac{c}{\alpha} \alpha \right) \leq m(c-\alpha)+m(\alpha),
	\end{align*}
	with a strict inequality if $m(\alpha)$ is reached. To prove that \eqref{L6-4} holds, note that in view of \cref{Lemma-structure} (\ref{point:5L.5i}), for any $\varepsilon >0$ sufficiently small, there exists a $u \in V(\alpha)$ such that
	\begin{align}
	F_{\mu}(u) \leq m(\alpha) + \varepsilon \quad \mbox{and} \quad F_{\mu}(u) <0. \label{eqn:5.11}
	\end{align}
	In view of \cref{LL6-1},  $\displaystyle f(\alpha, \rho) \geq 0$ for any $\rho \in \[ \dfrac{\alpha}{c} \rho_0, \rho_0\]$.
	Hence, we can deduce from \cref{Lemma-L1} and \eqref{eqn:5.11} that
	\begin{equation}\label{wellinside}
	||\nabla u||_2^2 < \frac{\alpha}{c} \rho_0.
	\end{equation} 
	Consider now $v = \sqrt{\theta} u$.  We first note that $||v||_2^2 = \theta ||u||_2^2 = \theta \alpha$ and also, because of \eqref{wellinside},
	$||\nabla v||_2^2 = \theta ||\nabla u||_2^2 < \rho_0.$
	Thus $v \in V(\theta \alpha)$ and we can write
	\begin{align*}
	m(\theta \alpha) &\leq F_{\mu}(v) = \dfrac{1}{2} \theta \normLp{\diff u}{2}^2 - \frac{\mu}{q} \theta^{\frac{q}{2}} \normLp{u}{q}^q - \dfrac{1}{2^*} \theta^{\frac{2^*}{2}} \normLp{u}{2^*}^{2^*}  
	< \dfrac{1}{2} \theta \normLp{\diff u}{2}^2 - \frac{\mu}{q} \theta \normLp{u}{q}^q - \dfrac{1}{2^*} \theta \normLp{u}{2^*}^{2^*}  \\
	&= \theta \(\dfrac{1}{2} \normLp{\diff u}{2}^2 - \frac{\mu}{q}  \normLp{u}{q}^q - \dfrac{1}{2^*} \normLp{u}{2^*}^{2^*}\) 
	= \theta F_{\mu}(u) \leq \theta (m(\alpha) + \varepsilon). 
	\end{align*}
	Since $\varepsilon > 0$ is arbitrary, we have that  $m(\theta \alpha) \leq \theta m(\alpha)$. If $m(\alpha)$ is reached then we can let $\varepsilon = 0$ in \eqref{eqn:5.11} and thus the strict inequality follows.
\end{proof}

\begin{lemma} \label{lemma:5.6}
	Let $(v_n) \subset B_{\rho_0}$ be such that $\normLp{v_n}{q} \to 0$. Then there exists a $\beta_0 > 0$ such that
	\begin{align*}
	F_{\mu}(v_n) \geq \beta_0 ||\nabla v_n||_2^2 + o_n(1).
	\end{align*}
\end{lemma}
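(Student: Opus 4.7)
The plan is straightforward: since $\|v_n\|_q \to 0$ eliminates the subcritical contribution, everything reduces to controlling the Sobolev critical term $\|v_n\|_{2^*}^{2^*}$ by $\|\nabla v_n\|_2^2$ on the set where $\|\nabla v_n\|_2^2 < \rho_0$, and then exploiting the fact that $\rho_0$ is precisely the zero of $g_{c_0}$.

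First I would discard the $L^q$ term: from $\|v_n\|_q \to 0$ one gets
\begin{equation*}
F_\mu(v_n) = \tfrac{1}{2}\|\nabla v_n\|_2^2 - \tfrac{1}{2^*}\|v_n\|_{2^*}^{2^*} + o_n(1).
\end{equation*}
Next I would apply the Sobolev inequality \eqref{Sobolev-I} in the form $\|v_n\|_{2^*}^{2^*} \leq \mathcal{S}^{-2^*/2}\|\nabla v_n\|_2^{2^*}$, and use the identity $2^*/2 = 1 + \alpha_2/2$ to factor out $\|\nabla v_n\|_2^2$:
\begin{equation*}
F_\mu(v_n) \geq \|\nabla v_n\|_2^2\Bigl[\tfrac{1}{2} - \tfrac{1}{2^*}\mathcal{S}^{-2^*/2}(\|\nabla v_n\|_2^2)^{\alpha_2/2}\Bigr] + o_n(1).
\end{equation*}

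Since $v_n \in B_{\rho_0}$, one has $(\|\nabla v_n\|_2^2)^{\alpha_2/2} \leq \rho_0^{\alpha_2/2}$, hence the bracket above is bounded below by $\frac{1}{2} - \frac{1}{2^*}\mathcal{S}^{-2^*/2}\rho_0^{\alpha_2/2}$. The last ingredient is that $\rho_0 = \rho_{c_0}$ satisfies $f(c_0,\rho_0)=0$, which rewrites as
\begin{equation*}
\tfrac{1}{2} - \tfrac{1}{2^*}\mathcal{S}^{-2^*/2}\rho_0^{\alpha_2/2} \;=\; \tfrac{\mu}{q}C_{N,q}^q \, \rho_0^{\alpha_0/2}\, c_0^{\alpha_1/2} \;>\; 0.
\end{equation*}
Defining $\beta_0 := \frac{\mu}{q}C_{N,q}^q \rho_0^{\alpha_0/2} c_0^{\alpha_1/2}$ then yields the claim.

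There is essentially no obstacle here; the entire content is that the radius $\rho_0$ was chosen exactly so that the "$\frac12 - $ Sobolev" part of $F_\mu$ stays uniformly positive on $B_{\rho_0}$. I would simply emphasize that the strict bound $\|\nabla v_n\|_2^2 < \rho_0$ together with $f(c_0,\rho_0)=0$ leaves a positive gap, and this gap (not the auxiliary parameter $c_0$) is what produces $\beta_0$.
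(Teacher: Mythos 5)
Your proof is correct and coincides step by step with the paper's argument: drop the $L^q$ term using $\|v_n\|_q\to 0$, apply the Sobolev inequality, factor out $\|\nabla v_n\|_2^2$, bound the bracket via $\|\nabla v_n\|_2^2<\rho_0$, and identify the resulting constant with $\frac{\mu}{q}C_{N,q}^q\rho_0^{\alpha_0/2}c_0^{\alpha_1/2}>0$ through $f(c_0,\rho_0)=0$. Nothing to add.
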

\begin{proof}
	Indeed, using the Sobolev inequality \eqref{Sobolev-I}, we obtain that
	\begin{align*}
	F_{\mu}(v_n) &= \frac{1}{2} \normLp{\diff v_n}{2}^2  - \frac{1}{2^*}\normLp{v_n}{2^*}^{2^*} + o_n(1)  
	\geq \frac{1}{2} \normLp{\diff v_n}{2}^2  - \frac{1}{2^*} \dfrac{1}{\mathcal{S}^{\frac{2^*}{2}}} \normLp{\diff v_n}{2}^{2^*} + o_n(1) \\
	&= \normLp{\diff v_n}{2}^2 \[ \frac{1}{2}  - \frac{1}{2^*} \dfrac{1}{\mathcal{S}^{\frac{2^*}{2}}} \normLp{\diff v_n}{2}^{\alpha_2}\] + o_n(1) 
	 \geq  \normLp{\diff v_n}{2}^2 \[ \frac{1}{2}  - \frac{1}{2^*} \dfrac{1}{\mathcal{S}^{\frac{2^*}{2}}} \rho_0^{\frac{\alpha_2}{2}}\] + o_n(1).  
	\end{align*}
	Now, since $f(c_0,\rho_0) = 0,$ we have that
	\begin{align*}
	\beta_0 := \[ \frac{1}{2}  - \frac{1}{2^*} \dfrac{1}{\mathcal{S}^{\frac{2^*}{2}}} \rho_0^{\frac{\alpha_2}{2}}\] = \frac{\mu}{q} C_{N,q}^q \rho_0^{\frac{\alpha_0}{2} } c_0^{\frac{\alpha_1}{2} } > 0.
	\end{align*}
\end{proof}

\begin{lemma} \label{lemma:5.3}
	For any $c \in (0, c_0)$, let $(u_n) \subset B_{\rho_0}$ be such that $\normLp{u_n}{2}^2 \to c$ and $F_{\mu}(u_n) \to m(c)$. Then, there exist a $\beta_1 > 0$ and a sequence $(y_n) \subset \R^N$ such that
	\begin{align}
	\int_{B(y_n, R)} \abs{u_n}^2 dx \geq \beta_1 > 0, \qquad \text{for some } R > 0. \label{eqn:5.1}
	\end{align}
\end{lemma}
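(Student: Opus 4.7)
The plan is to argue by contradiction, using a vanishing-type dichotomy argument rescued by \cref{lemma:5.6}. Suppose the conclusion fails. Then, for every $R > 0$,
\begin{equation*}
\sup_{y \in \R^N} \int_{B(y,R)} |u_n|^2\, dx \to 0.
\end{equation*}
Since $(u_n) \subset B_{\rho_0}$ with $\normLp{u_n}{2}^2 \to c$, the sequence is bounded in $H$. Invoking the classical vanishing lemma of P.-L. Lions (\cite[Lemma I.1]{LIONS1984-2}), this would imply $\normLp{u_n}{q} \to 0$ since $q \in (2,2^*)$.

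At this point, in a Sobolev subcritical setting one would be tempted to deduce immediately $\liminf F_\mu(u_n) \ge 0$, but here the critical term $\normLp{u_n}{2^*}^{2^*}$ need not vanish. This is exactly the difficulty the authors circumvent by working inside $B_{\rho_0}$: \cref{lemma:5.6} applies directly to $(u_n)$ and yields
\begin{equation*}
F_\mu(u_n) \ge \beta_0 \normLp{\diff u_n}{2}^2 + o_n(1) \ge o_n(1),
\end{equation*}
with $\beta_0 > 0$. Passing to the limit gives $m(c) \ge 0$, which contradicts $m(c) < 0$ established in \cref{Lemma-structure}(i). Therefore \eqref{eqn:5.1} must hold for some $R > 0$, $\beta_1 > 0$, and some sequence $(y_n) \subset \R^N$.

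The only nontrivial point is recognizing that standard vanishing alone would not be enough to discard the critical mass concentration, and that the threshold $\rho_0$ defining $V(c)$ was chosen precisely so that the pure Sobolev contribution cannot overtake the kinetic term. Once one has \cref{lemma:5.6} in hand, the argument is routine. I expect no further obstacle; the proof is short and mechanical given the preparatory lemmas.
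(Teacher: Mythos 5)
Your proof is correct and follows exactly the same route as the paper: assume vanishing, apply \cite[Lemma I.1]{LIONS1984-2} to get $\normLp{u_n}{q}\to 0$, then invoke \cref{lemma:5.6} to conclude $F_\mu(u_n)\geq o_n(1)$, contradicting $m(c)<0$ from \cref{Lemma-structure}. Nothing further is needed.
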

\begin{proof}
	We assume by contradiction that \eqref{eqn:5.1} does not hold. Since $(u_n) \subset B_{\rho_0}$ and $\normLp{u_n}{2}^2 \to c$, the sequence $(u_n)$ is bounded in $H$. From \cite[Lemma I.1]{LIONS1984-2} and since  $2 < q < 2^*$, we deduce that $\normLp{u_n}{q} \to 0,$ as $ n \to \infty.$
	At this point, \cref{lemma:5.6} implies that $F_{\mu} (u_n) \geq o_n(1)$. This contradict the fact that $m(c) <0$ and the lemma follows.
\end{proof}


\begin{proof} [Proof of \cref{theorem:LT-L}]
	We know from \cref{lemma:5.3}  and Rellich compactness theorem that there exists a sequence $(y_n) \subset \R^N$ such that 
	\begin{align*}
	u_n(x - y_n) \weakto u_c \neq 0 \quad\text{in } \Hoc.
	\end{align*}
	Our aim is to prove that $w_n(x) := u_n(x - y_n) - u_c(x) \to 0$ in $\Hoc$. Clearly
	\begin{align*}
	\normLp{u_n}{2}^2 &= \normLp{u_n(x-y_n)}{2}^2
	= \normLp{u_n(x-y_n) - u_c(x)}{2}^2 + \normLp{u_c}{2}^2 + o_n(1)\\
	&=\normLp{w_n}{2}^2 + \normLp{u_c}{2}^2 + o_n(1).
	\end{align*}
	Thus, we have
	\begin{align}
	\normLp{w_n}{2}^2 = \normLp{u_n}{2}^2 -  \normLp{u_c}{2}^2 + o_n(1) = c - \normLp{u_c}{2}^2 + o_n(1). \label{eqn:5.6}
	\end{align}
	By a similar argument, 
	\begin{align}
	\normLp{\diff w_n}{2}^2 = \normLp{\diff u_n}{2}^2 -  \normLp{\diff u_c}{2}^2 + o_n(1). \label{eqn:5.15}
	\end{align}
	More generally, taking into account that any term in $F_{\mu}$ fulfills the splitting properties of Brezis-Lieb \cite{BrezisLieb1983}, we have
	\begin{align*}
	F_{\mu}(w_n ) + F_{\mu}(u_c) =F_{\mu}(u_n(x-y_n)) + o_n(1),
	\end{align*}
	and, by the translational invariance, we obtain
	\begin{equation}\label{eqn:5.5}
	F_{\mu}(u_n) =F_{\mu}(u_n(x-y_n))
	=F_{\mu}(w_n) + F_{\mu}(u_c) + o_n(1).
	\end{equation}
	Now, we claim that
	\begin{align}\label{claimS}
	\normLp{w_n}{2}^2 \to 0. 
	\end{align}
	In order to prove this, let us denote $c_1:= \normLp{u_c}{2}^2 > 0$. By \eqref{eqn:5.6}, if we show that $c_1 = c$ then the claim follows. We assume by contradiction that $c_1 < c$.
	In view of \eqref{eqn:5.6} and \eqref{eqn:5.15}, for $n$ large enough, we have $\normLp{w_n}{2}^2 \leq c$ and $\normLp{\diff w_n}{2}^2 \leq \normLp{\diff u_n}{2}^2 < \rho_0$. Hence, we obtain that $w_n \in V(\normLp{w_n}{2}^2)$ and $F_{\mu}(w_n) \geq m\(\normLp{w_n}{2}^2\)$.
	Recording that $F_{\mu}(u_n) \to m(c)$, in view of \eqref{eqn:5.5}, we have
	\begin{align*}
	m(c) = F_{\mu}(w_n) + F_{\mu}(u_c) + o_n(1) \geq m\(\normLp{w_n}{2}^2\) + F_{\mu}(u_c) + o_n(1). 
	\end{align*}	
	Since the map $c \mapsto m(c)$ is continuous (see \cref{lemma:5.5}\eqref{point:5.5ii}) and in view of \eqref{eqn:5.6}, we deduce that
	\begin{align}
	m(c) \geq m(c-c_1) + F_{\mu}(u_c). \label{eqn:5.13}
	\end{align}
	We also have that $u_c \in V(c_1)$ by the weak limit. This implies that $F_{\mu}(u_c) \geq m(c_1)$. If $F_{\mu}(u_c) > m(c_1)$, then it follows from \eqref{eqn:5.13} and \cref{lemma:5.5}\eqref{point:5.5iii} that
	\begin{align*}
	m(c) > m(c-c_1) + m(c_1) \geq m(c -c_1 + c_1) = m(c),
	\end{align*}
	which is impossible. Hence, we have $F_{\mu}(u_c) = m(c_1)$, namely $u_c$ is a local minimizer on $V(c_1)$. So,  using \cref{lemma:5.5}\eqref{point:5.5iii} with the strict inequality, we deduce from \eqref{eqn:5.13} that
	\begin{align*}
	m(c) \geq m(c-c_1) + F_{\mu}(u_c) = m(c-c_1) + m(c_1) > m(c -c_1 + c_1) = m(c),
	\end{align*}
	which is impossible. Thus, the claim \eqref{claimS} follows and from \eqref{eqn:5.6} we deduce that
	$\normLp{u_c}{2}^2 = c$.\medskip
	
	Let us now show that $||\nabla w_n||_2^2 \to 0$. This will prove that $w_n \to 0$ in $\Hoc$ and completes the proof.
	In this aim first observe that 
	 in view of \eqref{eqn:5.15} and since $u_c \neq 0$,  we have $\normLp{\diff w_n}{2}^2 \leq \normLp{\diff u_n}{2}^2 < \rho_0$, for $n$ large enough.
	Hence $(w_n) \subset B_{\rho_0}$ and in particular it is bounded in $H$. Then by using the Gagliardo-Nirenberg inequality \eqref{Gagliardo-Nirenberg-I}, and by recalling $||w_n||_2^2 \to 0$ we also have $||w_n||_q^q \to 0$.
	Thus  
	\cref{lemma:5.6} implies that 
	\begin{equation}\label{firstpart}
	F_{\mu} (w_n) \geq \beta_0 \normLp{\diff w_n}{2}^2+o_n(1) \, \mbox{ where } \, \beta_0 > 0.
	\end{equation}
	  Now we remember that
	\begin{equation*}\label{LLLL}
	F_{\mu}(u_n) = F_{\mu}(u_c) + F_{\mu}(w_n) + o_n(1) \to m(c).
	\end{equation*} 
	Since $u_c \in V(c)$ by weak limit, we have that  $F_{\mu}(u_c) \geq m(c)$ and hence
	$F_{\mu}(w_n) \leq o_n(1)$. In view of \eqref{firstpart}, we then conclude that $\normLp{\diff w_n}{2}^2 \to 0$. 
\end{proof}


We end this section with,
\begin{proof}[Proof of \cref{thm-1}]
The fact that if $(u_n) \subset V(c)$ is such that $F_{\mu}(u_n) \to m(c)$ then, up to translation, $u_n \to u \in \mathcal{M}_c$ in $H$ follows from \cref{theorem:LT-L}.  In particular, it insures the existence of a minimizer for $F_{\mu}$ on $V(c)$. The fact that this minimizer is a ground state and that any ground state for $F_{\mu}$ on $S(c)$ belongs to $V(c)$ was proved in \cref{Lemma-structure}.
\end{proof}


\section{A condition for a uniform local existence result}\label{Section-4}
In this section we focus  on the local existence of solutions to the following Cauchy problem 
\begin{align}
\begin{cases} \label{NLS}
i \partial_t u + \Delta u + \mu u |u|^{q-2} + u |u|^{2^* - 2} = 0, \quad (t, x) \in \R \times \Rn, \quad N\geq 3 \\
u(0, x) = \varphi(x) \in \Hoc.
\end{cases}
\end{align}
Denoting $g: \C \to \C$ by $g(u) := \mu u |u|^{q-2} + u |u|^{2^* - 2}$,  \eqref{NLS} reads as
\begin{equation*}
i\partial_t u + \Delta u + g(u) = 0.
\end{equation*}

 Next we give the notion of integral equation associated with \eqref{NLS}.
In order to do that first we give another definition.
\begin{definition}
	If $N\geq 3$ the pair $(p, r)$ is said to be (Schr\"{o}dinger) admissible if
	\begin{align*}
	\dfrac{2}{p} + \dfrac{N}{r} = \dfrac{N}{2}, \qquad p, r \in [2, \infty].
	\end{align*}
\end{definition}
We shall work with two particular admissible pairs (see \cref{lemma:1T}):
\begin{align*}
(p_1, r_1) := \(\dfrac{4q}{(q - 2)(N-2)}, \dfrac{Nq}{q + N -2}\),
\end{align*}
and
\begin{align*}
(p_2, r_2) := \(\dfrac{4 \times 2^*}{(2^* - 2)(N-2)}, \dfrac{N \times 2^*}{2^* + N -2}\).
\end{align*}
Along with those couples we introduce the spaces $Y_{T}:= Y_{p_1, r_1, T} \cap Y_{p_2, r_2, T}$ and $X_{T}:= X_{p_1, r_1, T} \cap X_{p_2, r_2, T}$ equipped with the following norms:
\begin{align} \label{eqn:XT}
\norm{w}_{Y_T} = \norm{w}_{Y_{p_1, r_1, T}} + \norm{w}_{Y_{p_2, r_2, T}}, \quad\mbox{and}\quad \norm{w}_{X_T} = \norm{w}_{X_{p_1, r_1, T}} + \norm{w}_{X_{p_2, r_2, T}}. 
\end{align}
where for a generic function $w(t,x)$ defined on the time-space strip $[0, T)\times \Rn$ we have defined:
\begin{align*}
||w(t, x)||_{Y_{p,r,T}} = \(\int_{0}^{T} \normLp{w(t, \cdot)}{r}^p dt \)^{\frac{1}{p}} \hbox{ 
and }
||w(t, x)||_{X_{p,r,T}} = \(\int_{0}^{T} ||w(t, \cdot)||_{W^{1,r}(\Rn)}^p dt \)^{\frac{1}{p}}.
\end{align*}

\begin{definition} \label{def:1}
	Let $T >0$.
	We say that $u(t,x)$ is an integral solution of the Cauchy problem \eqref{NLS} on the time interval $[0, T]$ if:
	\begin{enumerate}
		\item $u \in \mathcal{C}([0, T], \Hoc) \cap X_T$;
		\item for all $t \in (0, T)$ it holds
		$
		u(t) = e^{it\Delta}\varphi - i\int_0^t e^{i(t-s)\Delta}g(u(s))ds.
		$
	\end{enumerate}
\end{definition}

The main result of this section is the following local existence result. We do not claim a real originality here, related versions already exist in the literature, see for example\cite[Theorem 2.5]{KenigMerle2006}. However, we believe convenient to the reader to provide a version specifically adapted to our problem and to give a proof of this result as self-contained as possible.

\begin{proposition}\label{prop:cauchy}
	There exists $\gamma_0  > 0$ such that if $\varphi \in \Hoc$ and  $T \in (0, 1]$ satisfy
	\begin{equation*} 
	\|e^{it\Delta}\varphi\|_{X_T} \leq \gamma_0, 
	\end{equation*}
	then there exists a unique integral solution $u(t,x)$ to \eqref{NLS} on the time interval $[0, T]$. Moreover
	$u(t,x)\in X_{p,r, T}$ for every admissible couple $(p,r)$ and satisfies the following conservation laws:
	\begin{equation}
	\label{eq:laws}
	F_\mu(u(t)) =  F_\mu(\varphi), \quad \normLp{u(t)}{2} = \normLp{\varphi}{2}, \quad \mbox{for all }t \in [0, T].
	\end{equation}
\end{proposition}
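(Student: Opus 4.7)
\emph{Overall strategy.} The plan is to run a contraction-mapping argument in the Strichartz space $X_T$ in the spirit of Cazenave--Weissler. I would set
$$\Phi(u)(t) := e^{it\Delta}\varphi - i \int_0^t e^{i(t-s)\Delta} g(u(s))\, ds,$$
and look for a fixed point of $\Phi$ in the closed ball $B := \{u \in X_T : \norm{u}_{X_T} \leq 2\gamma_0\}$ for $\gamma_0 > 0$ sufficiently small, chosen independently of $\varphi$ and of $T \in (0,1]$. Existence, uniqueness, and the fact that the solution lies in $X_{p,r,T}$ for every admissible pair $(p,r)$ will all fall out from this construction combined with the usual Strichartz estimates
$$\norm{e^{it\Delta}\varphi}_{X_{p,r,T}} \lesssim \norm{\varphi}_{\Hoc}, \qquad \Bigl\| \int_0^t e^{i(t-s)\Delta} F(s)\, ds \Bigr\|_{X_{p,r,T}} \lesssim \norm{F}_{L^{a'}_t W^{1,b'}_x},$$
valid for any admissible pairs $(p,r)$ and $(a,b)$.

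\emph{Nonlinear estimates and contraction.} The choice of the pairs $(p_1,r_1)$ and $(p_2,r_2)$ in \eqref{eqn:XT} is made precisely so that the Sobolev embeddings $W^{1,r_1}(\Rn) \hookrightarrow L^q(\Rn)$ and $W^{1,r_2}(\Rn) \hookrightarrow L^{2^*}(\Rn)$, combined with H\"older in space and time, yield bounds of the form
$$\bigl\| u |u|^{q-2} \bigr\|_{L^{p_1'}_t W^{1,r_1'}_x} \lesssim T^{\theta} \norm{u}_{X_{p_1,r_1,T}}^{q-1}, \qquad \bigl\| u|u|^{2^*-2} \bigr\|_{L^{p_2'}_t W^{1,r_2'}_x} \lesssim \norm{u}_{X_{p_2,r_2,T}}^{2^*-1},$$
for some $\theta > 0$ coming from the mass-subcriticality of the $q$-term (so the factor $T^\theta$ is harmless since $T \leq 1$) and with no positive power of $T$ on the scale-invariant critical term. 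An analogous estimate on differences $g(u)-g(v)$, based on the pointwise bound $|g(u)-g(v)| \lesssim (|u|^{q-2}+|v|^{q-2}+|u|^{2^*-2}+|v|^{2^*-2})|u-v|$, controls it by $\norm{u-v}_{X_T}$ times a power of $\norm{u}_{X_T}+\norm{v}_{X_T}$. Taking $\gamma_0$ small enough, depending only on $N$, $q$, $\mu$ and the Strichartz/Sobolev constants, I then get $\Phi(B) \subset B$ and $\Phi$ strictly contracting on $B$, producing a unique $u \in X_T$. Inserting $u = \Phi(u)$ into the Strichartz estimate in any other admissible norm places $u$ in every $X_{p,r,T}$, and the Duhamel identity in \cref{def:1} then forces $u \in \mathcal{C}([0,T], \Hoc)$.

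\emph{Conservation laws and main obstacle.} For the conservation laws \eqref{eq:laws} I would use a density argument: approximate $\varphi$ in $\Hoc$ by Schwartz data $\varphi_n$. Continuity of $\varphi \mapsto e^{it\Delta}\varphi$ into $X_T$ preserves the smallness hypothesis for $n$ large, so each $\varphi_n$ produces a solution $u_n$ in the same ball $B$; for smooth data the conservation of energy and mass holds by the standard computation, and continuous dependence in $X_T$ (which itself comes from applying the contraction estimate to two fixed points corresponding to different data) lets me pass to the limit. The main obstacle throughout is the Sobolev-critical nonlinearity $u|u|^{2^*-2}$: since it is scale invariant, no positive power of $T$ can be extracted from its Strichartz estimate and the classical trick of shrinking $T$ to gain smallness fails. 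This is precisely why the hypothesis is phrased as smallness of $\norm{e^{it\Delta}\varphi}_{X_T}$ rather than of $\norm{\varphi}_{\Hoc}$, and why the admissible pair $(p_2,r_2)$ is dictated by the critical scaling. This dependence on the linear evolution rather than the raw datum is what makes the resulting uniform local existence statement the right tool for the global-in-time argument near $\mathcal{M}_c$ carried out in \cref{Section-5}.
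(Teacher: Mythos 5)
Your overall architecture (Duhamel operator, fixed point in a small ball of $X_T$, smallness of $\norm{e^{it\Delta}\varphi}_{X_T}$ rather than of $\norm{\varphi}_{\Hoc}$ because the critical term yields no positive power of $T$) is exactly the paper's, and your nonlinear estimates correspond to \cref{lemma:1T}. But there is one genuine gap: you run the contraction in the $X_T$ norm itself, claiming that the difference $g(u)-g(v)$ is controlled by $\norm{u-v}_{X_T}$. The pointwise bound $|g(u)-g(v)|\lesssim(|u|^{q-2}+|v|^{q-2}+|u|^{2^*-2}+|v|^{2^*-2})|u-v|$ only controls the \emph{undifferentiated} difference. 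To contract in $X_T$ you would need a Lipschitz estimate for $\nabla g(u)-\nabla g(v)$, which contains the term $(g'(u)-g'(v))\nabla v$; since $q$ may be arbitrarily close to $2$ (and $2^*<3$ when $N\geq 7$), the map $z\mapsto g'(z)$ is only H\"older continuous of exponent $q-2<1$, and no Lipschitz bound in $\norm{u-v}_{X_T}$ is available. As stated, your contraction step fails for general $q\in(2,2+4/N)$.

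The standard repair, and the one the paper uses, is to keep the ball $B_{2\gamma_0,T}=\{u\in X_T:\norm{u}_{X_T}\leq 2\gamma_0\}$ but equip it with the weaker metric $d(u,v)=\norm{u-v}_{Y_T}$ (no derivatives), where the Lipschitz difference estimate \eqref{eqn:2.18} does hold. This costs an extra functional-analytic input: one must check that $(B_{2\gamma_0,T},d)$ is complete, which the paper does in \cref{complete} via reflexivity of $X_{p,r,T}$ (\cref{lem:reflexive}, Phillips' theorem) and weak lower semicontinuity of the $X$-norms. Two further consequences you should address: (a) the fixed point obtained this way is a priori unique only in the ball, so uniqueness in all of $X_T$ requires a separate continuation/bootstrapping argument (the paper's Step 2); (b) your continuous-dependence claim used for the conservation laws must likewise be formulated in the $Y_T$ metric. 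Your density argument for \eqref{eq:laws} is a legitimate alternative to the paper's citation of Ozawa and Ginibre, but passing energy conservation to the limit at Sobolev-critical regularity is itself delicate and deserves more than one line.
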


In order to prove \cref{prop:cauchy} we need some preliminary results. \medskip


Let us recall Strichartz's estimates that will be useful in the sequel (see for example \cite[Theorem 2.3.3 and Remark 2.3.8]{Cazenave2003semilinear} and \cite{KeelTao1998} for the endpoint estimates). 
\begin{proposition}\label{Strichartz}  Let $N\geq 3$ then 
	for every admissible pairs $(p, r)$ and $(\tilde p, \tilde r)$, there exists a constant $C> 0$ such that for every $T>0$, the following properties hold:
	\begin{itemize}
		\item[(i)] For every $\varphi \in \Lp{2}$, the function $t \mapsto e^{it\Delta}\varphi$ belongs to $Y_{p, r, T} \cap \mathcal{C}([0,T], \Lp{2})$ and
		\begin{align*}
		\norm*{e^{it\Delta} \varphi}_{Y_{p,r,T}} \leq C \norm{\varphi}_{2}.
		\end{align*}
		
		\item[(ii)] Let $F \in Y_{\tilde p ', \tilde r', T}$, where we use a prime to denote conjugate indices. Then the function
		\begin{align*}
		t \mapsto \Phi_F(t):= \int_{0}^{t}e^{i(t-s)\Delta}F(s) ds
		\end{align*}
		belongs to $Y_{p, r, T} \cap \mathcal{C}([0,T], \Lp{2})$ and 
		\begin{align*}
		\norm*{\Phi_F}_{Y_{p,r,T}} \leq C \norm{F}_{Y_{\tilde p ', \tilde r', T}}.
		\end{align*}
		
		\item[(iii)] For every $\varphi \in \Hoc$, the function $t \mapsto e^{it\Delta}\varphi$ belongs to $X_{p, r, T} \cap \mathcal{C}([0,T], \Hoc)$ and
		\begin{align*}
		\norm*{e^{it\Delta} \varphi}_{X_{p,r,T}} \leq C \norm{\varphi}_{\Hoc}.
		\end{align*}
		
	\end{itemize}
\end{proposition}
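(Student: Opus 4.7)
The plan is to establish (i)--(iii) via the classical $TT^{\ast}$ duality argument, combining the dispersive decay of the free Schr\"odinger propagator with Hardy--Littlewood--Sobolev in time for the non-endpoint admissible pairs, and the Keel--Tao bilinear interpolation scheme for the endpoint. First, I would record two basic bounds on $e^{it\Delta}$: the unitarity $\|e^{it\Delta}\varphi\|_{L^2} = \|\varphi\|_{L^2}$ (immediate from the Fourier representation), and the dispersive estimate
\begin{equation*}
\|e^{it\Delta}\varphi\|_{L^\infty(\Rn)} \le (4\pi|t|)^{-N/2}\|\varphi\|_{L^1(\Rn)},
\end{equation*}
obtained from the explicit Gaussian convolution kernel of $e^{it\Delta}$. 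Riesz--Thorin interpolation between these two bounds yields $\|e^{it\Delta}\varphi\|_{L^r(\Rn)} \lesssim |t|^{-N(1/2-1/r)}\|\varphi\|_{L^{r'}(\Rn)}$ for every $r \in [2,\infty]$.

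Next, I would prove (i) via $TT^{\ast}$. Writing $T\varphi(t) := e^{it\Delta}\varphi$, the formal adjoint satisfies $TT^{\ast}F(t) = \int_{\R} e^{i(t-s)\Delta}F(s)\,ds$, so by duality the estimate $\|T\varphi\|_{Y_{p,r,\infty}} \lesssim \|\varphi\|_{L^2}$ is equivalent to $\|TT^{\ast}F\|_{Y_{p,r,\infty}} \lesssim \|F\|_{L^{p'}_t L^{r'}_x}$. Applying the dispersive bound pointwise in time and then the one-dimensional Hardy--Littlewood--Sobolev inequality in $t$ yields the desired $L^p_t$ bound whenever $N(1/2-1/r)<1$ with $2/p = N(1/2-1/r)$, which is exactly the admissibility relation away from the endpoint $p=2$. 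Restricting the resulting global estimate to the time strip $[0,T]$ gives (i). The retarded version (ii) then follows from the untruncated inhomogeneous estimate via a Christ--Kiselev lemma (when the relevant time exponents are strictly ordered), combined with a second $TT^{\ast}$ step that allows the pairs $(p,r)$ and $(\tilde p,\tilde r)$ to be chosen independently.

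The main obstacle is the Keel--Tao endpoint $p=2$, $r=2N/(N-2)$ (available only because $N\ge 3$), at which the Hardy--Littlewood--Sobolev step just fails. Here I would invoke the bilinear interpolation argument of \cite{KeelTao1998}: dyadically decompose the kernel of $TT^{\ast}$ according to $|t-s|\sim 2^j$, estimate each dyadic piece as a bilinear form on $L^2_x\times L^2_x$ using the energy bound and on $L^{r'}_x\times L^{r'}_x$ using the dispersive bound, interpolate to gain geometric decay in $j$, and sum the dyadic contributions with the aid of a Lorentz-space refinement to recover the strong $L^2_tL^r_x$ estimate. As this is entirely standard I would simply cite \cite{KeelTao1998} for the endpoint and for the continuity statement, and note that estimates on $[0,T]$ follow from the estimates on $\R$ by restriction.

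Finally, assertion (iii) reduces to (i): since $\nabla$ and $e^{it\Delta}$ commute on $\Hoc$, applying (i) to both $\varphi$ and $\nabla\varphi$ yields $\|e^{it\Delta}\varphi\|_{X_{p,r,T}} \lesssim \|\varphi\|_{\Hoc}$. The continuity $t \mapsto e^{it\Delta}\varphi \in \mathcal{C}([0,T],\Lp{2})$ (respectively in $\Hoc$) follows from the strong continuity of the unitary group $e^{it\Delta}$, which is immediate on the Schwartz class via the Fourier representation and extends to $\Lp{2}$ (respectively $\Hoc$) by density, using the uniform operator bound together with an $\varepsilon/3$ approximation argument.
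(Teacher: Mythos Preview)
Your sketch is correct and follows the standard $TT^{\ast}$/Keel--Tao route. Note, however, that the paper does not actually prove this proposition: it is stated as a recall of known Strichartz estimates, with a citation to Cazenave's monograph (Theorem 2.3.3 and Remark 2.3.8) for the non-endpoint case and to Keel--Tao for the endpoint. Your outline is essentially a condensed account of precisely those references, so there is no discrepancy in approach---you have simply supplied the details that the paper outsources.
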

The following result will be useful in the sequel.
\begin{lemma} \label{lemma:1T}
	Let $N\geq 3$ and  $2 < \alpha \leq 2^*$ be given. Then the couple $(p,r)$ defined as follows
	\begin{align*}
	p := \dfrac{4\alpha}{(\alpha - 2)(N-2)} \qquad\mbox{and}\qquad r:= \dfrac{N\alpha}{\alpha + N -2}
	\end{align*}
	 is admissible. 
	Moreover for every admissible couple $(\tilde{p}, \tilde{r})$ there exists a constant $C > 0$ 
	such that for every $T >0$ the following inequalities hold:
	\begin{align}
	\norm*{\int_{0}^{t}e^{i(t-s)\Delta} [\nabla g_\alpha(u(s)) ]ds}_{Y_{\tilde{p},\tilde{r},T}}  &\leq C T^{\mu} \norm{\nabla u}_{Y_{p,r,T}}^{\alpha-1}, \label{eqn:2.17} \\
	\norm*{\int_{0}^{t}e^{i(t-s)\Delta} [g_\alpha(u(s)) - g_\alpha(v(s))] ds}_{Y_{\tilde{p},\tilde{r},T}}  &\leq C T^{\mu} (\norm{\nabla u}_{Y_{p,r,T}}^{\alpha-2} + \norm{\nabla v}_{Y_{p,r,T}}^{\alpha-2})  \norm{u-v}_{Y_{p,r,T}}, \label{eqn:2.18} 
	\end{align}
	where $g_\alpha(u) := u|u|^{\alpha - 2}$ and
	$
	\mu := \dfrac{(N-2)(2^*-\alpha)}{4} \geq 0.
	$
\end{lemma}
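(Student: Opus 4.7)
The admissibility of $(p,r)$ is a direct algebraic check: substituting the given formulas yields
\[
\frac{2}{p}+\frac{N}{r}=\frac{(\alpha-2)(N-2)}{2\alpha}+\frac{\alpha+N-2}{\alpha}=\frac{(\alpha-2)(N-2)+2(\alpha+N-2)}{2\alpha}=\frac{\alpha N}{2\alpha}=\frac{N}{2},
\]
and the hypothesis $2<\alpha\le 2^*$ places $p\in[2^*,\infty)$ and $r\in[2,2^*)$. For both inequalities I would invoke \cref{Strichartz}(ii) with the admissible pair $(p,r)$ itself playing the role of $(\tilde p,\tilde r)$ \emph{inside} the proposition (the role of $(\tilde p,\tilde r)$ in the current lemma being absorbed on the left-hand side). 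This reduces the task to controlling the $Y_{p',r',T}$-norm of $\nabla g_\alpha(u)$, respectively $g_\alpha(u)-g_\alpha(v)$.

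The crucial algebraic observation is that the Sobolev conjugate of $r$ is
\[
r^{\star}\;:=\;\frac{Nr}{N-r}\;=\;\frac{N\alpha}{N-2},
\]
and the spatial Hölder identity $\frac{1}{r'}=\frac{\alpha-2}{r^{\star}}+\frac{1}{r}$ holds exactly (both sides reduce to $\frac{\alpha N-\alpha-N+2}{N\alpha}$). For the first inequality I would then use the pointwise bound $|\nabla g_\alpha(u)|\le(\alpha-1)|u|^{\alpha-2}|\nabla u|$, Hölder in $x$ with the exponents above, and the Sobolev embedding $\|u\|_{L^{r^{\star}}}\lesssim\|\nabla u\|_{L^r}$ (coming from \eqref{Sobolev-I} after rescaling the exponent) to obtain the pointwise-in-$t$ bound $\|\nabla g_\alpha(u(t))\|_{L^{r'}_x}\lesssim\|\nabla u(t)\|_{L^r_x}^{\alpha-1}$. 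For the difference inequality I would replace the chain-rule bound by the mean-value bound $|g_\alpha(u)-g_\alpha(v)|\le C_\alpha(|u|^{\alpha-2}+|v|^{\alpha-2})|u-v|$ and run the same spatial Hölder (applying Sobolev only to the $|u|^{\alpha-2},|v|^{\alpha-2}$ factors), which yields $\|g_\alpha(u)-g_\alpha(v)\|_{L^{r'}_x}\lesssim(\|\nabla u\|_{L^r_x}^{\alpha-2}+\|\nabla v\|_{L^r_x}^{\alpha-2})\|u-v\|_{L^r_x}$.

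The last step is the time integration. I would apply Hölder in $t\in[0,T]$: the requirement $(\alpha-1)p'\le p$ simplifies to $\alpha\le p$, i.e.\ $(\alpha-2)(N-2)\le 4$, which is precisely the hypothesis $\alpha\le 2^*$ (equality at the critical case). The resulting time factor is $T^{1/p'-(\alpha-1)/p}=T^{1-\alpha/p}$, and
\[
1-\frac{\alpha}{p}\;=\;1-\frac{(\alpha-2)(N-2)}{4}\;=\;\frac{2N-(N-2)\alpha}{4}\;=\;\frac{(N-2)(2^{*}-\alpha)}{4}\;=\;\mu,
\]
giving the claimed $T^\mu$ prefactor in both estimates.

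The main (and really only) obstacle is the exponent bookkeeping: one must select the self-dual Strichartz pair $(p,r)$ and identify $r^{\star}=N\alpha/(N-2)$ as its Sobolev conjugate so that the spatial Hölder split, the Sobolev embedding, and the $T^\mu$ gain in the time Hölder all close simultaneously. Once these three identities are in place—each verifiable by substitution—the hypothesis $\alpha\le 2^*$ enters at precisely the two places where it is needed, namely to guarantee $\alpha\le p$ for the time Hölder and $\mu\ge 0$ for the prefactor; nothing further is required.
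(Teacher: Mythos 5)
Your proposal is correct and follows essentially the same route as the paper: the algebraic admissibility check, the pointwise bounds on $\nabla g_\alpha(u)$ and $g_\alpha(u)-g_\alpha(v)$, spatial H\"older combined with the Sobolev embedding into $L^{r^*}$ with $r^*=N\alpha/(N-2)$, H\"older in time (using $\alpha\le 2^*$ to get the $T^\mu$ factor), and finally the inhomogeneous Strichartz estimate of \cref{Strichartz}(ii) with the roles of the pairs assigned exactly as you describe. Your explicit verification of the exponent identities is a welcome addition but does not change the argument.
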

\begin{proof}	
	By direct calculations, one can check that
	\begin{align*}
	\dfrac{2}{p} + \dfrac{N}{r} = \dfrac{N}{2} \quad\mbox{and}\quad p, r \geq 2.
	\end{align*}
	Hence, $(p, r)$ is an admissible pair. Also it is easy to check that there exists a $C > 0$ such that :
	\begin{align}
	|g_\alpha'(u)| &\leq C|u|^{\alpha - 2}, \label{eqn:2.12}  \\
	|g_\alpha(u) - g_\alpha(v)| &\leq C|u - v|(|u|^{\alpha - 2} + |v|^{\alpha - 2}). \label{eqn:2.13} 
	\end{align}
	Combining \eqref{eqn:2.12} and the Chain Rule, gives
	\begin{align*}
	|\nabla g_\alpha(u) | = |g_\alpha'(u)\nabla u| \leq C |\nabla u| |u|^{\alpha - 2}. 
	\end{align*}
	Using H\"older's inequality,  we obtain that
	\begin{align*}
	\normLp{\nabla g_\alpha(u)}{r'} \leq C \normLp{|\nabla u| |u|^{\alpha - 2}}{r'} 
	\leq C \normLp{|\nabla u|}{r} \normLp{u}{r^*}^{\alpha - 2}
	\leq C \norm{\nabla u }^{\alpha - 1}_{r},
	\end{align*}
	where we also used the Sobolev embedding of $W^{1,r}(\R^N)$ into $L^{r^*}(\R^N)$ with $r^* := \dfrac{Nr}{N-r}$, see \cite[Theorem IX.9]{Brezis1983}.
	Hence, using  H\"{o}lder's inequality,
	\begin{align*}
	\norm{\nabla g_\alpha(u)}_{Y_{p', r', T}} &= \( \int_{0}^{T}  \normLp{\nabla g_\alpha(u)}{r'}^{p'} dt \)^{\frac{1}{p'}} 
	\leq C\( \int_{0}^{T}  || \nabla u ||^{(\alpha - 1)p'}_{r} dt \)^{\frac{1}{p'}} \\
	&\leq C T^{(\alpha-1)\(\frac{1}{(\alpha - 1) p'} - \frac{1}{p}\) } \( \int_{0}^{T}  || \nabla u ||^{p}_{r} dt \)^{\frac{\alpha - 1}{p}}
	= C T^{\mu} \norm{\nabla u}_{Y_{p,r,T}}^{\alpha-1}.
	\end{align*}
	At this point \eqref{eqn:2.17} follows by applying \cref{Strichartz} (ii). To establish \eqref{eqn:2.18} note that by   \eqref{eqn:2.13} and the H\"older's inequality, we have
	\begin{align*}
	\normLp{g_\alpha(u) - g_\alpha(v)}{r'} \leq C \normLp{|u - v|(|u|^{\alpha - 2} + |v|^{\alpha - 2})}{r'} 
	\leq C \normLp{u-v}{r} \normLp{|u| + |v|}{r^*}^{\alpha -2}.
	\end{align*}
	Hence, we can deduce that
	\begin{align*}
	\norm{g_\alpha(u) - g_\alpha(v)}_{Y_{p', r', T}} &= \( \int_{0}^{T}  \normLp{g_\alpha(u) - g_\alpha(v)}{r'}^{p'} dt \)^{\frac{1}{p'}} 
	\leq C \( \int_{0}^{T}  \normLp{u-v}{r}^{p'} \normLp{|u| + |v|}{r^*}^{(\alpha -2)p'} dt \)^{\frac{1}{p'}}\\
	&\leq C \( \int_{0}^{T}  \normLp{u-v}{r}^{p} dt \)^{\frac{1}{p}} \( \int_{0}^{T} \normLp{|u| + |v|}{r^*}^{\frac{(\alpha -2)pp'}{p-p'}} dt \)^{\frac{p-p'}{pp'}}
	\leq C T^{\mu} \norm{u-v}_{Y_{p,r,T}} \( \int_{0}^{T} \normLp{|u| + |v|}{r^*}^p dt \)^{\frac{\alpha-2}{p}}\\
	&= C T^{\mu} \norm{|u| + |v|}_{Y_{p,r^*,T}}^{\alpha-2}  \norm{u-v}_{Y_{p,r,T}} 
	\leq C T^{\mu} \(\norm{u}_{Y_{p,r^*,T}} + \norm{v}_{Y_{p,r^*,T}} \)^{\alpha-2}  \norm{u-v}_{Y_{p,r,T}}.
	\end{align*}
	The inequality \eqref{eqn:2.18} follows by applying the previous Sobolev embedding and \cref{Strichartz} (ii).
\end{proof}





In order to prove \cref{prop:cauchy} we shall need two lemmas from Functional Analysis.
\begin{lemma}
	\label{lem:reflexive}
	For all $1 < p, r < \infty$, $X_{p, r, T}$ is a separable reflexive Banach space.
\end{lemma}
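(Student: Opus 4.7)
The plan is to identify $X_{p,r,T}$ with the Bochner space $L^p\bigl([0,T]; W^{1,r}(\R^N)\bigr)$ and then invoke the standard functional-analytic fact that Bochner spaces $L^p([0,T]; E)$ inherit reflexivity and separability from $E$ whenever $1<p<\infty$.

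First I would check that $X_{p,r,T} = L^p\bigl([0,T]; W^{1,r}(\R^N)\bigr)$ as Banach spaces. This is essentially built into the definition, since
\[
\|w\|_{X_{p,r,T}}^p = \int_0^T \|w(t,\cdot)\|_{W^{1,r}(\R^N)}^p\,dt,
\]
so once one checks that the $t\mapsto w(t,\cdot)$ construction gives a strongly measurable $W^{1,r}$-valued map for the relevant representatives (which is standard via approximation by simple functions), the two spaces coincide.

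Next I would recall that $W^{1,r}(\R^N)$ is separable and reflexive for every $1<r<\infty$: separability follows from density of, say, $C_c^\infty(\R^N)$, while reflexivity follows from the isometric embedding $u\mapsto(u,\nabla u)$ of $W^{1,r}$ into the reflexive product space $L^r(\R^N)\times L^r(\R^N)^N$, closed in that product, hence reflexive as a closed subspace of a reflexive space. With $E:=W^{1,r}(\R^N)$ in hand, I would then apply the classical Bochner-space results: for $1<p<\infty$ and $E$ reflexive, the dual of $L^p([0,T];E)$ is isometrically $L^{p'}([0,T];E^*)$, and $L^p([0,T];E)$ is reflexive; likewise separability of $E$ transfers to separability of $L^p([0,T];E)$ (via simple functions with values in a countable dense subset of $E$, over rational partitions of $[0,T]$).

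Finally, since $X_T=X_{p_1,r_1,T}\cap X_{p_2,r_2,T}$ appears later with the sum norm, one could note in passing that a finite intersection of separable reflexive Banach spaces (embedded into a common Hausdorff topological vector space) is again separable and reflexive, but this is beyond what the stated lemma asks. The main subtlety, and essentially the only non-bookkeeping step, is justifying the strong measurability of $t\mapsto w(t,\cdot)$ as a $W^{1,r}$-valued map from the a priori scalar-valued definition of the norm; this is routine once one works with $C_c^\infty([0,T]\times\R^N)$ representatives, which are dense by a standard mollification argument.
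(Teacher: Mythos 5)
Your proposal is correct and follows essentially the same route as the paper: the paper's one-line proof cites Phillips' theorem from Diestel--Uhl, which is precisely the statement you unpack (reflexivity of $L^p([0,T];E)$ for $1<p<\infty$ and $E$ reflexive, via the duality $L^p([0,T];E)^* \cong L^{p'}([0,T];E^*)$, together with the transfer of separability), applied to $E = W^{1,r}(\R^N)$. Your additional remarks on the strong measurability of $t\mapsto w(t,\cdot)$ and on the separability and reflexivity of $W^{1,r}$ are correct standard facts that the paper leaves implicit.
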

\begin{proof}
	This is a direct consequence of Phillips' theorem, see \cite[Chapter IV, Corollary 2]{Diestel-Uhl}.
\end{proof}
\begin{lemma}\label{complete}
	For all $R, T > 0$ the metric space $(B_{R,T}, d)$, where
	\begin{equation*}
	B_{R,T} := \{u \in X_T: \|u\|_{X_T} \leq R\},
	\end{equation*}
	and
	\begin{equation*}
	d(u, v) := \|u - v\|_{Y_T}
	\end{equation*}
	is complete.
\end{lemma}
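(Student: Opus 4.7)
The plan is to use reflexivity of $X_T$ (Lemma~\ref{lem:reflexive}) together with the completeness of $Y_T$ as a Banach space and the continuous embedding $X_T \hookrightarrow Y_T$ (which is immediate since the $W^{1,r}$--norm controls the $L^r$--norm). Observe first that $d$ is indeed a metric on $B_{R,T}$: it is obviously symmetric and satisfies the triangle inequality, and non--degeneracy follows from the inclusion $B_{R,T}\subset X_T \subset Y_T$ together with the fact that $\|\cdot\|_{Y_T}$ is a norm on $Y_T$.

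Now let $(u_n) \subset B_{R,T}$ be a $d$--Cauchy sequence, i.e.\ $\|u_n - u_m\|_{Y_T} \to 0$ as $n, m \to \infty$. Since $Y_T = Y_{p_1,r_1,T}\cap Y_{p_2,r_2,T}$ is a Banach space, there exists $u \in Y_T$ with $u_n \to u$ strongly in $Y_T$. The task is to show that in fact $u \in X_T$, that $\|u\|_{X_T} \leq R$, and that $u_n \to u$ in the metric $d$ (the last point being automatic once $u \in B_{R,T}$ is known, since we already have $u_n \to u$ in $Y_T$).

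To promote the limit to $X_T$, I would argue by weak compactness. Since $\|u_n\|_{X_T} \leq R$ and $X_T = X_{p_1,r_1,T}\cap X_{p_2,r_2,T}$ is a reflexive Banach space by Lemma~\ref{lem:reflexive} (the intersection of two reflexive spaces, equipped with the sum norm, is reflexive), the Banach--Alaoglu theorem (in its sequential form for reflexive spaces) yields a subsequence $(u_{n_k})$ and some $v \in X_T$ such that $u_{n_k} \weakto v$ in $X_T$. By weak lower semicontinuity of the norm,
\begin{equation*}
\|v\|_{X_T} \leq \liminf_{k\to\infty} \|u_{n_k}\|_{X_T} \leq R.
\end{equation*}
Because the embedding $X_T \hookrightarrow Y_T$ is continuous and linear, $u_{n_k}\weakto v$ weakly in $Y_T$ as well; combined with the strong convergence $u_{n_k}\to u$ in $Y_T$ and uniqueness of weak limits, this forces $u = v \in X_T$ with $\|u\|_{X_T}\leq R$, i.e.\ $u \in B_{R,T}$.

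The only delicate point in this plan is the reflexivity input and its consequence that bounded sequences in $X_T$ admit weakly convergent subsequences in $X_T$ itself; everything else is a straightforward combination of elementary functional analysis (completeness of $Y_T$, continuity of the embedding, uniqueness of weak limits, lower semicontinuity of the norm). I do not expect any genuine obstacle, provided one keeps carefully the distinction between the ambient linear space $X_T$ (closed under weak limits) and the metric space structure on $B_{R,T}$ inherited from the weaker norm $\|\cdot\|_{Y_T}$.
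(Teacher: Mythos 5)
Your proposal is correct and follows essentially the same route as the paper: strong convergence in $Y_T$ from completeness, weak compactness in the reflexive $X$-spaces (Lemma~\ref{lem:reflexive}), identification of the weak limit with the $Y_T$-limit, and weak lower semicontinuity of the norm to keep the bound $R$. The only cosmetic difference is that the paper applies this argument componentwise in $X_{p_1,r_1,T}$ and $X_{p_2,r_2,T}$ and sums the resulting bounds, whereas you work directly with the intersection space $X_T$; both are valid.
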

\begin{proof}
	Let $(u_n)$ be a Cauchy sequence. Since $Y_T$ is a Banach space, there exists $u \in Y_T$ such that
	\begin{equation*}
	\lim_{n\to\infty}\|u_n - u\|_{Y_T} = 0.
	\end{equation*}
	It remains to show that $u \in B_{R,T}$.
	
	By taking a subsequence, we can assume that $l_1 := \lim_{n\to\infty}\|u_n\|_{X_{p_1, r_1, T}}$
	and $l_2 := \lim_{n\to\infty}\|u_n\|_{X_{p_2, r_2, T}}$ exist.
	By \cref{lem:reflexive}, there exists a subsequence of $(u_n)$ which converges weakly in $X_{p_1, r_1, T}$.
	In particular, this sequence converges in the sense of distributions and hence
	the limit equals $u$. Thus,
	\begin{equation*}
	\|u\|_{X_{p_1, r_1, T}} \leq l_1.
	\end{equation*}
	Similarly,
	\begin{equation*}
	\|u\|_{X_{p_2, r_2, T}} \leq l_2.
	\end{equation*}
	Taking the sum, we get $\|u\|_{X_T} \leq l_1 + l_2 \leq R$.	
\end{proof}

\begin{proof}[Proof of \cref{prop:cauchy}]
	\textbf{Step 1. Existence and uniqueness in $B_{2\gamma_0,T}$ for $\gamma_0$ small enough.}
	For any $u \in X_T$ and $t \in [0, T]$, we define
	\begin{equation}
	\label{eq:Phi-def}
	\Phi(u)(t) := e^{it\Delta}\varphi + i\int_0^te^{i(t-s)\Delta}g(u(s))ds.
	\end{equation}
	We claim that, if $\gamma_0 >0$ is small enough, then $\Phi$ defines a contraction on 
	the metric space $(B_{2\gamma_0,T}, d)$ (see  \cref{complete}).
	
	Let $u \in B_{2\gamma_0, T}$ and consider any admissible pair $(\tilde p, \tilde r)$. Let $T \in (0,1]$ and apply \cref{lemma:1T}. We deduce from \eqref{eqn:2.17} and \eqref{eq:Phi-def} that
	\begin{align*}
	\norm{\nabla \Phi(u)-e^{it\laplace} \nabla \varphi}_{Y_{\tilde p, \tilde r,T}} \leq C \norm{\nabla u}_{Y_{p_1,r_1,T}}^{q-1} + C \norm{\nabla u}_{Y_{p_2,r_2,T}}^{2^*-1} \leq C 2^q \gamma_0^{q - 1}, \quad \forall u\in 
	B_{2\gamma_0, T} . 
	\end{align*}
	Similarly, we deduce from \eqref{eqn:2.18} (applied with $v = 0$) that
	\begin{align*}
	\begin{split}
	\norm{\Phi(u)- e^{it\laplace} \varphi}_{Y_{\tilde p, \tilde r,T}} \leq C \norm{\nabla u}_{Y_{p_1,r_1,T}}^{q-2} \norm{u}_{Y_{p_1,r_1,T}} + C \norm{\nabla u}_{Y_{p_2,r_2,T}}^{2^*-2} \norm{u}_{Y_{p_2,r_2,T}}
	\leq C 2^q \gamma_0^{q-1}, \quad \forall u\in 
	B_{2\gamma_0, T} . 
	\end{split}
	\end{align*}
	In particular if we choose $(\tilde p, \tilde r)=(p_1, r_1)$ and $(\tilde p, \tilde r)=(p_2, r_2)$ then
	$$\|\Phi(u)\|_{X_T}\leq \gamma_0 + C 2^q \gamma_0^{q-1}$$
	and hence if $\gamma_0 >0$ is small enough in such a way that 
	$C 2^{q+2} \gamma_0^{q-1}\leq \gamma_0$, then $B_{2\gamma_0,T}$ is an invariant set of $\Phi$.
	
	Now, let $u, v \in B_{2\gamma_0, T}$. By \eqref{eqn:2.18}, we have for every admissible pair $(\tilde p, \tilde r)$
	\begin{align*}
	\|\Phi(u) - \Phi(v)\|_{Y_{\tilde p, \tilde r, T}}  &\leq C\big(\norm{\nabla u}_{Y_{p_1,r_1,T}}^{q-2} + \norm{\nabla v}_{Y_{p_1,r_1,T}}^{q-2}\big)\norm{u-v}_{Y_{p_1, r_1, T}} 
	+ C\big(\norm{\nabla u}_{Y_{p_2,r_2,T}}^{2^*-2}+ \norm{\nabla v}_{Y_{p_2,r_2,T}}^{2^*-2}\big)\norm{u-v}_{Y_{p_2, r_2, T}} \\
	&\leq C 2^q \gamma_0^{q - 2}(\norm{u-v}_{Y_{p_1, r_1, T}} + \norm{u-v}_{Y_{p_2, r_2, T}}), \quad \forall u,v \in 
	B_{2\gamma_0, T}.
	\end{align*}
	In particular if we choose $(\tilde p, \tilde r)=(p_1, r_1)$ and $(\tilde p, \tilde r)=(p_2, r_2)$ then
	$$\|\Phi(u) - \Phi(v)\|_{Y_{T}} \leq C 2^{q+1} \gamma_0^{q - 2}\|u - v\|_{Y_{T}}$$
	and if we choose $\gamma_0 >0$ small enough in such a way that $C 2^{q+1} \gamma_0^{q - 2}<\frac 12$
	then $\Phi$ is a contraction on $(B_{2\gamma_0, T}, d)$. In particular $\Phi$ has one unique
	fixed point in this space.
	The property  $u \in C([0, T], \Hoc)$ and $u\in X_{p,r,T}$ for every admissible couple $(p,r)$ is straightforward and     follows 
         by Strichartz estimates. 
         
	\textbf{Step 2. Uniqueness in $X_T$.} Assume $u_1(t,x)$ and $u_2(t,x)$ are two fixed points
of $\Phi$ in the space $X_T$. 
	We define 
	$T_0=\sup \{\bar T \in [0,T]| \sup_i \|u_i(t,x)\|_{X_{\bar T}} \leq 2\gamma_0\}$.
	It is easy to show that $T_0\in (0, \bar T]$ and arguing as in step 1 the operator $\Phi$ is a contraction on $(B_{2\gamma_0, T_0}, d)$. Hence by uniqueness of the fixed point in this space necessarily 
	$u_1(t,x)=u_2(t,x)$ in $X_{T_0}$.  Moreover since $u_i(t,x)\in \mathcal C([0, T_0]; H)$
	we have $u_1(T_0,x)=u_2(T_0,x)=\psi(x)$. Hence at time $T_0$ the solutions coincide and starting from $T_0$
	(that we can also identify with $T_0=0$ by using the traslation invariance w.r.t. to time of the equation),
	we can apply again the step 1 in the ball $(B_{2\gamma_0, \tilde T}, d)$ with initial condition
	$\psi(x)$, where 
	$\tilde T>0$ is such that $\|e^{it\Delta}\psi\|_{X_{\tilde T}} \leq \gamma_0$. Again by uniqueness
	of the fixed point of $\Phi$ in the space $(B_{2\gamma_0, \tilde T}, d)$ we deduce that
	$u_1(t,x)=u_2(t,x)$ in $X_{T_0+\tilde T}$, hence contradicting the definition of $T_0$ unless $T_0=T$. 
	
	
	\textbf{Step 3. Conservation laws.} The proof of \eqref{eq:laws} is 
	rather classical. In particular 
	it follows by Proposition 1 and Proposition 2 in \cite{Ozawa06}. Another possibility is to follow the proof of
	Propositions 5.3 and 5.4 in \cite{Ginibre}, that
	 can be repeated {\em mutatis mutandis} 
	in the context of \eqref{NLS}. The minor modification compared with  \cite{Ginibre} is that we use the end-point Strichartz   
	estimate in order to treat the Sobolev critical nonlinearity.    
\end{proof}


\section{Orbital stability}\label{Section-5}

We shall prove in this section that the set $\mathcal{M}_c$ defined in  \eqref{set-stable} is orbitally stable. 
In particular a nontrivial point concerns the fact that the local solutions, whose existence has been established in \cref{Section-4}, 
can be extended to global solutions provided that the initial datum is close to $\mathcal{M}_c$.
The main difficulty is related to the criticality of the nonlinearity in \eqref{NLS}, which implies that
an a priori bound on the Mass and the Energy is not sufficient
to exclude a finite-time blow-up.
We will overcome this issue by deducing
from the uniform local well-posedness (Proposition~\ref{prop:cauchy})
a uniform lower bound on the time of existence of the solution
corresponding to initial data close to a set which is compact up to translations, see Theorem~\ref{theorem:2}.

\medskip


To simplify the next statement we denote by $u_{\varphi}(t)$ the integral solution associated with \eqref{NLS} and we denote by $T_{\varphi}^{max}$ its maximal time of existence.

\begin{theorem} \label{theorem:1}
	Let $v \in \mathcal{M}_c $. Then, for  every $\varepsilon > 0$ there exists $\delta > 0$ such that:
	\begin{align} \label{eqn:2.9}
	\forall \varphi \in \Hoc \mbox{ s.t. } ||\varphi - v||_{\Hoc} < \delta \Longrightarrow \sup_{t \in [0, T_{\varphi}^{max})}  \dist{u_{\varphi}(t)}{\mathcal{M}_c} < \varepsilon. 
	\end{align}
	In particular we have
	\begin{align} \label{eqn:2.10}
	u_{\varphi}(t) = m_{c}(t) + r(t), \quad \forall t \in [0, T_{\varphi}^{max}), \mbox{ where } m_{c}(t) \in \mathcal{M}_c, \, \norm{r(t)}_{\Hoc} < \varepsilon.
	\end{align}
\end{theorem}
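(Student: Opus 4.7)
The plan is to argue by contradiction. Suppose there exists $\varepsilon_0 > 0$ and a sequence $(\varphi_n) \subset \Hoc$ with $\|\varphi_n - v\|_{\Hoc} \to 0$ such that $\sup_{t \in [0, T_{\varphi_n}^{max})} \dist{u_{\varphi_n}(t)}{\mathcal{M}_c} \geq \varepsilon_0$ for every $n$. Since $v \in \mathcal{M}_c$, we have $\dist{\varphi_n}{\mathcal{M}_c} \leq \|\varphi_n - v\|_{\Hoc} < \varepsilon_0/2$ for $n$ large. Using the continuity of $t \mapsto u_{\varphi_n}(t)$ in $\Hoc$ provided by \cref{prop:cauchy}, together with the intermediate value theorem, I would pick a first time $t_n \in (0, T_{\varphi_n}^{max})$ at which $\dist{u_{\varphi_n}(t_n)}{\mathcal{M}_c} = \varepsilon_0/2$, and set $w_n := u_{\varphi_n}(t_n)$. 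By the conservation laws \eqref{eq:laws}, $c_n := \normLp{w_n}{2}^2 = \normLp{\varphi_n}{2}^2 \to c$ and $F_\mu(w_n) = F_\mu(\varphi_n) \to F_\mu(v) = m(c) < 0$.

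The crucial step, and the main obstacle, is to show that $w_n \in V(c_n)$ for all $n$ large, i.e., $\normLp{\diff w_n}{2}^2 < \rho_0$. Since $v \in V(c) \subset B_{\rho_0}$, the hypothesis $\varphi_n \to v$ in $\Hoc$ yields $\normLp{\diff \varphi_n}{2}^2 \to \normLp{\diff v}{2}^2 < \rho_0$, so $\varphi_n \in B_{\rho_0}$ eventually. If, along a subsequence, $\normLp{\diff w_n}{2}^2 \geq \rho_0$, then by continuity of $t \mapsto \normLp{\diff u_{\varphi_n}(t)}{2}^2$ (again from \cref{prop:cauchy}) there would exist a first time $s_n \in (0, t_n]$ with $u_{\varphi_n}(s_n) \in \partial V(c_n)$. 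Repeating the argument from the proof of \cref{Lemma-structure} (\ref{point:5L.5i}), together with continuity of $f$ in its first variable at $c \in (0,c_0)$, would give $F_\mu(u_{\varphi_n}(s_n)) \geq \rho_0 f(c_n, \rho_0) \geq \tfrac{1}{2}\rho_0 f(c, \rho_0) > 0$ for $n$ large, contradicting $F_\mu(u_{\varphi_n}(s_n)) = F_\mu(\varphi_n) \to m(c) < 0$. The Sobolev-critical term forbids any cheaper argument relying solely on a uniform $\Hoc$ bound, so this energy-barrier trapping, built on the local-minimum geometry of \cref{Section-3}, is what makes the proof go through.

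With $(w_n) \subset B_{\rho_0}$, $\normLp{w_n}{2}^2 \to c$ and $F_\mu(w_n) \to m(c)$ in hand, \cref{theorem:LT-L} applies: up to a sequence of translations $y_n \in \R^N$, one has $w_n(\cdot - y_n) \to \tilde w$ in $\Hoc$ for some $\tilde w \in \mathcal{M}_c$. Since both $F_\mu$ and $V(c)$ are translation invariant, $\mathcal{M}_c$ is translation invariant as well, so $\tilde w(\cdot + y_n) \in \mathcal{M}_c$ and $\dist{w_n}{\mathcal{M}_c} \leq \|w_n - \tilde w(\cdot + y_n)\|_{\Hoc} \to 0$, contradicting $\dist{w_n}{\mathcal{M}_c} = \varepsilon_0/2$. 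Finally, the representation \eqref{eqn:2.10} follows immediately from \eqref{eqn:2.9}, by choosing, for each $t \in [0, T_\varphi^{max})$, an element $m_c(t) \in \mathcal{M}_c$ nearly attaining the distance $\dist{u_\varphi(t)}{\mathcal{M}_c}$ (attained in fact thanks to the compactness of $\mathcal{M}_c$ up to translations established in \cref{theorem:LT-L}).
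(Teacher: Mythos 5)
Your proposal is correct and follows essentially the same path as the paper's proof: argue by contradiction, use the conservation laws together with the energy barrier $F_\mu \geq \rho_0 f(\cdot,\rho_0) > 0$ on $\partial V$ to trap the flow in $B_{\rho_0}$, pick the first exit time from the $\varepsilon$-neighbourhood of $\mathcal{M}_c$, and apply \cref{theorem:LT-L} plus translation invariance of $\mathcal{M}_c$ to reach a contradiction. Your handling of the $c_n$ versus $c$ issue in the barrier estimate (via continuity of $f$ in its first variable) is in fact slightly more careful than the paper's, which invokes \cref{Lemma-structure}(\ref{point:5L.5i}) directly.
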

\begin{proof}
	Suppose the theorem is false. Then there exists $(\delta_n) \subset \R^+$  a decreasing sequence converging to $0$ and $(\varphi_n)  \subset \Hoc$ satisfying
	\begin{equation*}
	||\varphi_n - v||_{\Hoc} < \delta_n
	\end{equation*}
	and
	\begin{equation*}
	\sup_{t \in [0, T_{\varphi_n}^{max})} \dist{u_{\varphi_n}(t)}{\mathcal{M}_c} > \varepsilon_0,
	\end{equation*}
	for some $\varepsilon_0 >0$.  We observe that $||\varphi_n||_2^2 \to c$ and, by continuity of $F_{\mu}$, $F_{\mu}(\varphi_n) \to m(c)$. By conservation laws, for $n \in \N$ large enough, $u_{\varphi_n}$ will remains inside of $B_{\rho_0}$ for all $t \in [0, T_{\varphi_n}^{max})$.
	Indeed, if for some time $\overline{t}>0$ $||\nabla u_{\varphi_n}(\overline{t})||_2^2 = \rho_0$ then, in view of \cref{Lemma-structure} (\ref{point:5L.5i}) we have that $F_{\mu}(u_{\varphi_n}(\overline{t})) \geq 0$ in contradiction with $m(c) <0$. Now let $t_n>0$ be the first time such that
	$\dist{u_{\varphi_n}(t_n)}{\mathcal{M}_{c}} = \varepsilon_0$ and set $u_n := u_{\varphi_n}(t_n)$.
	By conservation laws, 
	$(u_n) \subset B_{\rho_0}$ satisfies $\normLp{u_n}{2}^2 \to c$ and $F_{\mu}(u_n) \to m(c)$ and thus, in view of \cref{theorem:LT-L}, it converges, up to translation, to an element of $\mathcal{M}_c$. Since $\mathcal{M}_c$ is invariant under translation this contradicts the equality 
	$\dist{u_n}{\mathcal{M}_{c}} = \varepsilon_0 >0$.
\end{proof}
The rest of this section is devoted to showing that $T_{\varphi}^{max}=\infty$ and it will conclude the proof of
\cref{thm-2}.

\begin{proposition} \label{proposition:1}
	Let $\mathcal{K} \subset \Hoc \setminus \{0\}$ be compact up to translation and assume that $(p, r)$ is an admissible pair with $p \neq \infty$. Then, for every $\gamma > 0$ there exists $\varepsilon = \varepsilon(\gamma) > 0$ and $T = T(\gamma) > 0$ such that 
	\begin{align*}
	\sup_{\{\varphi \in \Hoc| \dist{\varphi}{\mathcal{K}} < \varepsilon\}} \norm*{e^{it\Delta} \varphi}_{X_{p,r,T}} < \gamma.
	\end{align*}
\end{proposition}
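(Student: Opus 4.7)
The argument rests on two observations: first, the propagator $e^{it\Delta}$ commutes with spatial translations, so the norm $\|e^{it\Delta}\varphi\|_{X_{p,r,T}}$ is invariant under translations of $\varphi$; second, since $p < \infty$, for a single $\varphi \in \Hoc$ we have $\|e^{it\Delta}\varphi\|_{X_{p,r,T}} \to 0$ as $T \to 0^+$ by absolute continuity of the integral $\int_0^T \|e^{it\Delta}\varphi\|_{W^{1,r}}^p\,dt$ (which is finite for some $T_0 > 0$ by \cref{Strichartz}(iii)). The plan is to upgrade the pointwise convergence to uniform convergence on the compact set underlying $\mathcal K$, then absorb the perturbation via the linear Strichartz estimate.

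Let $\widetilde{\mathcal K} \subset \Hoc$ denote a compact set such that every $v \in \mathcal K$ can be written $v = \tau_y \widetilde v$ with $\widetilde v \in \widetilde{\mathcal K}$ and $y \in \Rn$ (where $\tau_y$ denotes translation by $y$). For $T > 0$ define $f_T : \widetilde{\mathcal K} \to [0,\infty)$ by $f_T(\widetilde v) := \|e^{it\Delta}\widetilde v\|_{X_{p,r,T}}$. By \cref{Strichartz}(iii) and linearity, $f_T$ is continuous on $\widetilde{\mathcal K}$ for every fixed $T$; moreover $T \mapsto f_T(\widetilde v)$ is monotone non-decreasing in $T$ and tends to $0$ as $T \searrow 0$ for each fixed $\widetilde v$ (by the second observation above). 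By Dini's theorem, applied to the net of continuous functions $\{f_T\}_{T > 0}$ on the compact space $\widetilde{\mathcal K}$ converging monotonically to the continuous limit $0$, we obtain a $T = T(\gamma) > 0$ such that
\begin{equation*}
\sup_{\widetilde v \in \widetilde{\mathcal K}} \|e^{it\Delta}\widetilde v\|_{X_{p,r,T}} < \gamma/2.
\end{equation*}

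Fix this $T$. Given $\varphi \in \Hoc$ with $\mathrm{dist}_H(\varphi,\mathcal K) < \varepsilon$, choose $v \in \mathcal K$ with $\|\varphi - v\|_{\Hoc} < 2\varepsilon$ and write $v = \tau_y \widetilde v$ with $\widetilde v \in \widetilde{\mathcal K}$. Since $e^{it\Delta}$ commutes with $\tau_y$ and the $W^{1,r}$ norm is translation invariant,
\begin{equation*}
\|e^{it\Delta}v\|_{X_{p,r,T}} = \|e^{it\Delta}\widetilde v\|_{X_{p,r,T}} < \gamma/2.
\end{equation*}
Combining with \cref{Strichartz}(iii) applied to $\varphi - v$,
\begin{equation*}
\|e^{it\Delta}\varphi\|_{X_{p,r,T}} \leq \|e^{it\Delta}v\|_{X_{p,r,T}} + \|e^{it\Delta}(\varphi - v)\|_{X_{p,r,T}} < \gamma/2 + C\|\varphi - v\|_{\Hoc} < \gamma/2 + 2C\varepsilon,
\end{equation*}
so choosing $\varepsilon = \varepsilon(\gamma) := \gamma/(4C)$ yields the claim.

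The only non-routine step is the uniform smallness of $f_T$ on $\widetilde{\mathcal K}$; the Dini argument is the cleanest route, but an alternative is a direct $\epsilon$-net argument: cover $\widetilde{\mathcal K}$ by finitely many $H$-balls of radius $\eta$ centered at $\widetilde v_1,\ldots,\widetilde v_N$, use Strichartz to control the contribution of each ball by $C\eta$, and then pick $T$ small enough that $f_T(\widetilde v_j) < \gamma/4$ for each of the finitely many centers. Either way, the hypothesis $p \neq \infty$ is essential—without it, $\|\cdot\|_{X_{\infty,r,T}}$ would not be controlled by integrals that vanish as $T \to 0$.
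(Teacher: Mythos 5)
Your proof is correct and follows essentially the same strategy as the paper's: both establish uniform smallness of $\norm{e^{it\Delta}\cdot}_{X_{p,r,T}}$ over $\mathcal{K}$ for small $T$ (using translation invariance of the Strichartz norms together with the absolute continuity of the time integral, which is where $p<\infty$ enters) and then absorb the perturbation with the linear Strichartz estimate $\norm{e^{it\Delta}\eta}_{X_{p,r,T}}\leq C\norm{\eta}_{H}$; the paper performs the pointwise-to-uniform upgrade by a sequential contradiction argument rather than Dini's theorem, but the two are interchangeable. The only caveat is that your main argument presupposes the existence of a compact section $\widetilde{\mathcal{K}}$ of the translation orbits, which does not follow verbatim from the sequential notion of compactness up to translation that the paper actually uses; your alternative $\epsilon$-net argument, which needs only total boundedness up to translation (a direct consequence of the sequential definition), sidesteps this and is the safer formulation.
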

\begin{proof}
	We first claim, for every $\gamma > 0$, the existence of a $T > 0$ such that
	\begin{align}\label{ajoutL5}
	\sup_{\varphi \in \mathcal{K}} \norm*{e^{it\Delta} \varphi}_{X_{p,r,T}} < \dfrac{\gamma}{2}.
	\end{align}
	If it is not true then there exists sequences $(\varphi_n) \subset \mathcal{K}$ and $(T_n) \subset \R^+$
	such that $T_n \to 0$ and
	\begin{align}\label{eqn:1t}
	\norm*{e^{it\Delta} \varphi_n}_{X_{p,r,T_n}} \geq \overline{\gamma} 
	\end{align}
	for a suitable $\overline{\gamma}>0$. Since $\mathcal{K}$ is compact up to translation, passing to a subsequence, there exists a sequence $(x_n) \subset \R^N$ such that
	$$ \tilde{\varphi_n}(\cdot) := \varphi_n(\cdot - x_n) \overset{ H }\to \varphi(\cdot)$$
	for a $\varphi \in \Hoc$. By continuity (induced by Strichartz's estimates) we have, for every $\bar T>0$,
	\begin{align}\label{ajoutL2}
	\norm*{e^{it\Delta} \tilde{\varphi}_n}_{X_{p,r,\bar T}} \to \norm*{e^{it\Delta} \varphi}_{X_{p,r,\bar T}}.
	\end{align}	
	Also, recording the translation invariance of Strichartz's estimates we get from \eqref{eqn:1t} that
	\begin{equation}\label{ajoutL1}
	\norm*{e^{it\Delta} \tilde{\varphi}_n}_{X_{p,r,T_n}}  = \norm*{e^{it\Delta} \varphi_n}_{X_{p,r,T_n}} \geq \overline{\gamma}. 
	\end{equation}
	Now, by \cref{Strichartz} (iii), we have $e^{it\Delta} \varphi \in X_{p,r, 1}$, namely the function 
	$$[0,1]\ni t\to  g(t):=||e^{it\Delta} \varphi||_{W^{1,r}(\Rn)}^p$$
	belongs to $L^1([0,1])$. Then by 
	the Dominated Convergence Theorem we get $\|\chi_{[0, \tilde T]}(t) g(t)\|_{L^1([0,1])} \to 0$
	as $\tilde T\to 0$,
	namely $\norm*{e^{it\Delta} \varphi}_{X_{p,r,\tilde T}}^p 
	\to 0$ as $\tilde T \to 0$. Hence, we can choose $\bar T >0$ such that
	\begin{align}\label{ajoutL3}
	\norm*{e^{it\Delta} \varphi}_{X_{p,r,\bar T}} < \overline{\gamma}.
	\end{align}
	At this point gathering \eqref{ajoutL2}- \eqref{ajoutL3} we get a contradiction and the claim holds.
	Now, fix a $T>0$ such that \eqref{ajoutL5} holds.  By \cref{Strichartz} (iii), we have
	\begin{align*}
	\norm*{e^{it\Delta} \eta}_{X_{p,r,T}} \leq C ||\eta||_{\Hoc}, \qquad \forall \eta \in \Hoc.
	\end{align*}
	Thus, assuming  that $\displaystyle ||\eta||_{\Hoc} < \frac{\gamma}{2C} := \varepsilon$, we obtain that
	\begin{align*}
	\norm*{e^{it\Delta} \eta}_{X_{p,r,T}} < \frac{\gamma}{2}.
	\end{align*}
	Summarizing, we get that, for all $\varphi \in \mathcal{K}$ and all $\eta \in \Hoc$ such that $||\eta||_{\Hoc} < \varepsilon$,
	\begin{align*}
	\norm*{e^{it\Delta} (\varphi + \eta)}_{X_{p,r,T}} \leq \norm*{e^{it\Delta} \varphi}_{X_{p,r,T}} + \norm*{e^{it\Delta} \eta}_{X_{p,r,T}} < \gamma.
	\end{align*}
	This implies the proposition.
\end{proof}

\begin{proposition} \label{proposition:2}
	Let $\mathcal{K} \subset \Hoc \setminus \{0\}$ be compact up to translation. Then, for every $\gamma > 0$ there exists $\varepsilon = \varepsilon(\gamma) > 0$ and $T = T(\gamma) > 0$ such that 
	\begin{align*}
	\sup_{\{\varphi \in \Hoc| \dist{\varphi}{\mathcal{K}} < \varepsilon\}} \norm*{e^{it\Delta} \varphi}_{X_{T}} < \gamma.
	\end{align*}
\end{proposition}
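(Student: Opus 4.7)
The plan is to deduce Proposition 5.3 from Proposition 5.2 by applying the latter separately to each of the two admissible pairs $(p_1, r_1)$ and $(p_2, r_2)$ that define $X_T$, and then summing.

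First I would verify that Proposition 5.2 is applicable to both pairs: by the definitions $p_1 = \tfrac{4q}{(q-2)(N-2)}$ and $p_2 = \tfrac{4 \cdot 2^*}{(2^*-2)(N-2)}$, and since $N \geq 3$, $q > 2$, $2^* > 2$, both exponents are finite. Hence Proposition 5.2 can be applied with the pair $(p_1, r_1)$ and the threshold $\gamma/2$, yielding constants $\varepsilon_1 = \varepsilon_1(\gamma) > 0$ and $T_1 = T_1(\gamma) > 0$ such that
\begin{equation*}
\sup_{\{\varphi \in H \,|\, \dist{\varphi}{\mathcal{K}} < \varepsilon_1\}} \norm{e^{it\Delta} \varphi}_{X_{p_1, r_1, T_1}} < \frac{\gamma}{2}.
\end{equation*}
Analogously, applying Proposition 5.2 with $(p_2, r_2)$ and threshold $\gamma/2$ produces $\varepsilon_2 > 0$ and $T_2 > 0$ with the analogous bound for $\|e^{it\Delta}\varphi\|_{X_{p_2, r_2, T_2}}$.

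Next I would set $\varepsilon := \min(\varepsilon_1, \varepsilon_2)$ and $T := \min(T_1, T_2)$. A key observation is that for any admissible pair $(p,r)$ the map $T \mapsto \|e^{it\Delta}\varphi\|_{X_{p,r,T}}$ is non-decreasing, since $X_{p,r,T}$ is defined via an $L^p$-integral over $[0,T]$; hence shrinking $T$ preserves both bounds. Therefore, for any $\varphi \in H$ with $\dist{\varphi}{\mathcal{K}} < \varepsilon$, the definition $\|w\|_{X_T} = \|w\|_{X_{p_1,r_1,T}} + \|w\|_{X_{p_2,r_2,T}}$ in \eqref{eqn:XT} gives
\begin{equation*}
\norm{e^{it\Delta}\varphi}_{X_T} \leq \norm{e^{it\Delta}\varphi}_{X_{p_1,r_1,T_1}} + \norm{e^{it\Delta}\varphi}_{X_{p_2,r_2,T_2}} < \frac{\gamma}{2} + \frac{\gamma}{2} = \gamma,
\end{equation*}
which is exactly what is required.

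There is no real obstacle here: the proposition is a direct packaging of Proposition 5.2 into the sum-of-norms space $X_T$, and the only subtlety to flag is the monotonicity in $T$, which allows the two separate time thresholds $T_1$ and $T_2$ to be replaced by their minimum without invalidating either individual bound.
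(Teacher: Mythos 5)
Your argument is correct and is essentially the paper's own proof: the authors likewise obtain \cref{proposition:2} by applying \cref{proposition:1} to each of the two admissible pairs $(p_1,r_1)$ and $(p_2,r_2)$ and invoking the definition \eqref{eqn:XT} of the $X_T$-norm as the sum of the two component norms. Your explicit remarks on taking the minima of the thresholds and on the monotonicity of $T \mapsto \norm{e^{it\Delta}\varphi}_{X_{p,r,T}}$ merely spell out details the paper leaves implicit.
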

\begin{proof}
	We apply \cref{proposition:1} twice with the admissible pairs $(p_1, r_1)$ and $(p_2, r_2)$. Then, the proposition follows from the definition of the norm $X_T$ given in \eqref{eqn:XT}.
\end{proof}

\begin{theorem} \label{theorem:2}
	Let  $\mathcal{K} \subset \Hoc \setminus \{0\}$ be compact up to translation. Then there exist $\varepsilon_0 > 0$ and $T_0>0$ such that the Cauchy problem \eqref{NLS}, where $\varphi$ satisfies $\dist{\varphi}{\mathcal{K}} < \varepsilon_0$, has a unique solution on the time interval $[0, T_0]$ in the sense of \cref{def:1}.
\end{theorem}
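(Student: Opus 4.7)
The plan is to combine the uniform linear-flow control provided by \cref{proposition:2} with the local well-posedness threshold from \cref{prop:cauchy}. First, let $\gamma_0>0$ be the constant produced by \cref{prop:cauchy}, i.e.~such that any initial datum $\varphi \in \Hoc$ whose free evolution satisfies $\|e^{it\Delta}\varphi\|_{X_T}\leq \gamma_0$ on some $T\in(0,1]$ yields a unique integral solution to \eqref{NLS} on $[0,T]$.

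Next, I apply \cref{proposition:2} to the compact-up-to-translation set $\mathcal{K}$ with this specific value $\gamma=\gamma_0$. This produces $\varepsilon_0=\varepsilon(\gamma_0)>0$ and $T_*=T(\gamma_0)>0$ such that
\begin{equation*}
\sup_{\{\varphi\in\Hoc\,:\,\dist{\varphi}{\mathcal{K}}<\varepsilon_0\}}\|e^{it\Delta}\varphi\|_{X_{T_*}}<\gamma_0.
\end{equation*}
Setting $T_0:=\min(T_*,1)\in(0,1]$, the same inequality persists with $T_*$ replaced by $T_0$, since $\|\cdot\|_{X_T}$ is monotone non-decreasing in $T$.

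With this choice of $\varepsilon_0$ and $T_0$, every $\varphi$ with $\dist{\varphi}{\mathcal{K}}<\varepsilon_0$ falls into the scope of \cref{prop:cauchy} on the interval $[0,T_0]$, producing a unique integral solution $u_\varphi\in\mathcal{C}([0,T_0],\Hoc)\cap X_{T_0}$ in the sense of \cref{def:1}. Since $T_0$ and $\varepsilon_0$ depend only on $\mathcal{K}$ (through $\gamma_0$), the time of existence is uniform over the whole tube $\{\varphi:\dist{\varphi}{\mathcal{K}}<\varepsilon_0\}$, as required.

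I do not expect any genuine obstacle here: the work is done by \cref{proposition:2} (where the compactness up to translation and the translation invariance of Strichartz norms are used to transfer smallness of the free evolution from $\mathcal{K}$ to a uniform $\Hoc$-neighbourhood of $\mathcal{K}$) and by \cref{prop:cauchy} (which converts that smallness into local existence). The only mild care concerns the constraint $T\in(0,1]$ in \cref{prop:cauchy}, which is handled by the truncation $T_0=\min(T_*,1)$.
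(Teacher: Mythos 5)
Your proposal is correct and follows exactly the paper's own argument: apply \cref{proposition:2} with $\gamma=\gamma_0$ from \cref{prop:cauchy}, then take $\varepsilon_0=\varepsilon(\gamma_0)$ and $T_0=\min\{T(\gamma_0),1\}$. The only addition is your explicit remark on the monotonicity of $\|\cdot\|_{X_T}$ in $T$, which the paper leaves implicit.
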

\begin{proof}
	We apply \cref{proposition:2} where $\gamma = \gamma_0$ is given in \cref{prop:cauchy}. Then \cref{prop:cauchy} guarantees that the theorem holds for $\varepsilon_0 = \varepsilon (\gamma_0) >0$ and $T_0 = \min\{ T(\gamma_0),1 \} >0$. 
\end{proof}

\begin{theorem} \label{theorem:3}
	Let $\mathcal{M}_c$ be defined in \eqref{set-stable}. Then there exists a $\delta_0 > 0$ such that, if $\varphi \in \Hoc$ satisfies $\dist{\varphi}{\mathcal{M}_c} < \delta_0$ the corresponding solution to \eqref{NLS} satisfies $T_{\varphi}^{max} = \infty$.
\end{theorem}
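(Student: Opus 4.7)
The plan is to combine the uniform local well-posedness provided by \cref{theorem:2} with a uniform version of the stability estimate of \cref{theorem:1}, and then run a standard continuation argument past any candidate finite maximal time.

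\textbf{Step 1 (uniform local well-posedness near $\mathcal{M}_c$).} First, I would apply \cref{theorem:2} with $\mathcal{K}:=\mathcal{M}_c$. This is admissible because every $v\in\mathcal{M}_c$ satisfies $\normLp{v}{2}^2=c>0$, so $\mathcal{M}_c\subset \Hoc\setminus\{0\}$, and $\mathcal{M}_c$ is compact up to translation by \cref{theorem:LT-L}. This yields constants $\varepsilon_0>0$ and $T_0>0$ such that every $\psi\in\Hoc$ with $\dist{\psi}{\mathcal{M}_c}<\varepsilon_0$ generates an integral solution to \eqref{NLS} on $[0,T_0]$.

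\textbf{Step 2 (set-based uniformization of \cref{theorem:1}).} Next, I would upgrade \cref{theorem:1} into the following uniform statement: for every $\varepsilon>0$ there exists $\delta>0$ such that
\[
\dist{\varphi}{\mathcal{M}_c}<\delta\ \Longrightarrow\ \sup_{t\in[0,T_\varphi^{max})}\dist{u_\varphi(t)}{\mathcal{M}_c}<\varepsilon.
\]
The argument is by contradiction combined with compactness modulo translations: a violating sequence $(\varphi_n)$ admits $v_n\in\mathcal{M}_c$ with $\norm{\varphi_n-v_n}_{\Hoc}\to 0$, and the compactness of $\mathcal{M}_c$ up to translation furnishes $y_n\in\R^N$ and $v\in\mathcal{M}_c$ with $v_n(\cdot+y_n)\to v$ in $\Hoc$ along a subsequence. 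Setting $\tilde\varphi_n:=\varphi_n(\cdot+y_n)$, we have $\tilde\varphi_n\to v$ in $\Hoc$, while translation invariance of the equation and of $\mathcal{M}_c$ implies $\dist{u_{\tilde\varphi_n}(t)}{\mathcal{M}_c}=\dist{u_{\varphi_n}(t)}{\mathcal{M}_c}$, so $(\tilde\varphi_n)$ inherits the bad property. Applying \cref{theorem:1} pointwise at $v$ with parameter $\varepsilon/2$ then contradicts the assumed failure. I expect this uniformization to be the main obstacle, since \cref{theorem:1} is stated only pointwise in $v$ and one must carefully track translations.

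\textbf{Step 3 (continuation past $T_\varphi^{max}$).} Finally, I would take $\delta_0>0$ to be the $\delta$ produced by Step 2 applied with $\varepsilon=\varepsilon_0$, and argue by contradiction: suppose $T_\varphi^{max}<\infty$ for some $\varphi$ with $\dist{\varphi}{\mathcal{M}_c}<\delta_0$. Choosing any $t_1\in(\max\{0,T_\varphi^{max}-T_0/2\},T_\varphi^{max})$, Step 2 gives $\dist{u_\varphi(t_1)}{\mathcal{M}_c}<\varepsilon_0$, so Step 1 produces an integral solution $w$ on $[0,T_0]$ with $w(0)=u_\varphi(t_1)$. The uniqueness part of \cref{prop:cauchy} forces $w(\cdot)\equiv u_\varphi(t_1+\cdot)$ on their common interval of existence, so concatenating $u_\varphi$ on $[0,t_1]$ with $w(\cdot-t_1)$ on $[t_1,t_1+T_0]$ yields an integral solution on $[0,t_1+T_0]$ with $t_1+T_0>T_\varphi^{max}$, contradicting the maximality of $T_\varphi^{max}$. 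Hence $T_\varphi^{max}=\infty$, which together with Step 2 completes the proof of the orbital stability stated in \cref{thm-2}.
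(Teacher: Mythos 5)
Your proposal is correct and takes essentially the same route as the paper: apply \cref{theorem:2} with $\mathcal{K}=\mathcal{M}_c$ to obtain a uniform existence time $T_0$ on an $\varepsilon_0$-neighbourhood of $\mathcal{M}_c$, use \cref{theorem:1} to confine the flow to that neighbourhood, and iterate to contradict the finiteness of $T_{\varphi}^{max}$. Your Step 2 spells out a uniformity in $v\in\mathcal{M}_c$ of the $\delta$ from \cref{theorem:1} that the paper invokes implicitly; your compactness-modulo-translations argument for it is valid, and alternatively one can note that the paper's proof of \cref{theorem:1} only uses $v$ to force $\normLp{\varphi_n}{2}^2\to c$ and $F_{\mu}(\varphi_n)\to m(c)$, so it already delivers the uniform statement verbatim.
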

\begin{proof}
	We make use of \cref{theorem:2} where we choose $\mathcal{K} = \mathcal{M}_c$. By \cref{theorem:1}, we can choose a $\delta_0 >0$ such that \eqref{eqn:2.9} and \eqref{eqn:2.10} holds for $\varepsilon = \varepsilon_0$ where $\varepsilon_0 >0$ is given in \cref{theorem:2}. Then \cref{theorem:1} guarantees that the solution 
	$u_{\varphi}(t)$ where $\dist{\varphi}{\mathcal{M}_{c}} < \delta_0$ satisfies $\dist{u_{\varphi}(t)}{\mathcal{M}_{c}} < \varepsilon_0$ up to the maximum time of existence $T_{\varphi}^{max} \geq T_0$. Since, at any time in $(0, T_{\varphi}^{max})$ we can apply again \cref{theorem:2} that guarantees an uniform additional time of existence $T_0 >0$, this contradicts the definition of $T_{\varphi}^{max}$ if $T_{\varphi}^{max} < \infty$.
\end{proof}

At this point we can give,
\begin{proof}[Proof of \cref{thm-2}]
	The fact that $\mathcal{M}_c$ is compact, up to translation, was established in \cref{theorem:LT-L}. The orbital stability of $\mathcal{M}_c$, in the sense of 
	\cref{def:stability} follows from \cref{theorem:1} and \cref{theorem:3}. 	
\end{proof}


\renewcommand{\bibname}{References}
\bibliographystyle{plain}
\bibliography{References}

\begin{thebibliography}{10}

\bibitem{AkahoriIbrahimKikuchiNawa2012}
Takafumi Akahori, Slim Ibrahim, Hiroaki Kikuchi, and Hayato Nawa.
\newblock Existence of a ground state and blow-up problem for a nonlinear
  {S}chr\"{o}dinger equation with critical growth.
\newblock {\em Differential Integral Equations}, 25(3-4):383--402, 2012.

\bibitem{AkahoriIbrahimKikuchiNawa2013}
Takafumi Akahori, Slim Ibrahim, Hiroaki Kikuchi, and Hayato Nawa.
\newblock Existence of a ground state and scattering for a nonlinear
  {S}chr\"{o}dinger equation with critical growth.
\newblock {\em Selecta Math. (N.S.)}, 19(2):545--609, 2013.

\bibitem{AlvesSoutoMontenegro2012}
Claudianor~O. Alves, Marco A.~S. Souto, and Marcelo Montenegro.
\newblock Existence of a ground state solution for a nonlinear scalar field
  equation with critical growth.
\newblock {\em Calc. Var. Partial Differential Equations}, 43(3-4):537--554,
  2012.

\bibitem{BellazziniBoussaidJeanjeanVisciglia17}
Jacopo Bellazzini, Nabile Boussa\"{\i}d, Louis Jeanjean, and Nicola Visciglia.
\newblock Existence and stability of standing waves for supercritical {NLS}
  with a partial confinement.
\newblock {\em Comm. Math. Phys.}, 353(1):229--251, 2017.

\bibitem{BellazziniJeanjean2016}
Jacopo Bellazzini and Louis Jeanjean.
\newblock On dipolar quantum gases in the unstable regime.
\newblock {\em SIAM J. Math. Anal.}, 48(3):2028--2058, 2016.

\bibitem{BellazziniJeanjeanLuo2013}
Jacopo Bellazzini, Louis Jeanjean, and Tingjian Luo.
\newblock Existence and instability of standing waves with prescribed norm for
  a class of {S}chr\"{o}dinger-{P}oisson equations.
\newblock {\em Proc. Lond. Math. Soc. (3)}, 107(2):303--339, 2013.

\bibitem{BerestyckiCazenave1981}
Henri Berestycki and Thierry Cazenave.
\newblock Instabilit\'{e} des \'{e}tats stationnaires dans les \'{e}quations de
  {S}chr\"{o}dinger et de {K}lein-{G}ordon non lin\'{e}aires.
\newblock {\em C. R. Acad. Sci. Paris S\'{e}r. I Math.}, 293(9):489--492, 1981.

\bibitem{Bourgain1999}
J.~Bourgain.
\newblock Global wellposedness of defocusing critical nonlinear
  {S}chr\"{o}dinger equation in the radial case.
\newblock {\em J. Amer. Math. Soc.}, 12(1):145--171, 1999.

\bibitem{Brezis1983}
Ha\"{\i}m Brezis.
\newblock {\em Analyse fonctionnelle}.
\newblock Collection Math\'{e}matiques Appliqu\'{e}es pour la Ma\^{\i}trise.
  [Collection of Applied Mathematics for the Master's Degree]. Masson, Paris,
  1983.
\newblock Th\'{e}orie et applications. [Theory and applications].

\bibitem{BrezisLieb1983}
Ha\"{\i}m Br\'{e}zis and Elliott Lieb.
\newblock A relation between pointwise convergence of functions and convergence
  of functionals.
\newblock {\em Proc. Amer. Math. Soc.}, 88(3):486--490, 1983.

\bibitem{BrezisNirenberg1983}
Ha\"{\i}m Br\'{e}zis and Louis Nirenberg.
\newblock Positive solutions of nonlinear elliptic equations involving critical
  {S}obolev exponents.
\newblock {\em Comm. Pure Appl. Math.}, 36(4):437--477, 1983.

\bibitem{Cazenave2003semilinear}
Thierry Cazenave.
\newblock {\em Semilinear {S}chr\"{o}dinger equations}, volume~10 of {\em
  Courant Lecture Notes in Mathematics}.
\newblock New York University, Courant Institute of Mathematical Sciences, New
  York; American Mathematical Society, Providence, RI, 2003.

\bibitem{CazenaveLions1982}
Thierry Cazenave and Pierre-Louis Lions.
\newblock Orbital stability of standing waves for some nonlinear
  {S}chr\"{o}dinger equations.
\newblock {\em Comm. Math. Phys.}, 85(4):549--561, 1982.

\bibitem{CazenaveWeissler1990}
Thierry Cazenave and Fred~B. Weissler.
\newblock The {C}auchy problem for the critical nonlinear {S}chr\"{o}dinger
  equation in {$H^s$}.
\newblock {\em Nonlinear Anal.}, 14(10):807--836, 1990.

\bibitem{ChenMiaoZhao2016}
Xing Cheng, Changxing Miao, and Lifeng Zhao.
\newblock Global well-posedness and scattering for nonlinear {S}chr\"{o}dinger
  equations with combined nonlinearities in the radial case.
\newblock {\em J. Differential Equations}, 261(6):2881--2934, 2016.

\bibitem{ColesGustafson20}
Matt Coles and Stephen Gustafson.
\newblock Solitary waves and dynamics for subcritical perturbations of energy
  critical {NLS}.
\newblock {\em Publ. Res. Inst. Math. Sci.}, 56(4):647--699, 2020.

\bibitem{CollianderKeelStaffilaniTakaokaTao2008}
J.~Colliander, M.~Keel, G.~Staffilani, H.~Takaoka, and T.~Tao.
\newblock Global well-posedness and scattering for the energy-critical
  nonlinear {S}chr\"{o}dinger equation in {$\Bbb R^3$}.
\newblock {\em Ann. of Math. (2)}, 167(3):767--865, 2008.

\bibitem{Diestel-Uhl}
J.~Diestel and Jr. J.~J.~Uhl.
\newblock {\em Vector Measures}, volume~15 of {\em Mathematical surveys}.
\newblock American Mathematical Society, 1977.

\bibitem{Ginibre}
Jean Ginibre.
\newblock {\em Introduction aux \'equations de {S}chr\"{o}dinger non
  lin\'eaires}.
\newblock Cours de DEA 1994-1995. Publications math\'ematiques d'Orsay.

\bibitem{GouJeanjean2018}
Tianxiang Gou and Louis Jeanjean.
\newblock Multiple positive normalized solutions for nonlinear
  {S}chr\"{o}dinger systems.
\newblock {\em Nonlinearity}, 31(5):2319--2345, 2018.

\bibitem{HajaiejStuart2004}
Hichem Hajaiej and Charles~Alexander Stuart.
\newblock On the variational approach to the stability of standing waves for
  the nonlinear {S}chr\"{o}dinger equation.
\newblock {\em Adv. Nonlinear Stud.}, 4(4):469--501, 2004.

\bibitem{JEANJEAN1997}
Louis Jeanjean.
\newblock Existence of solutions with prescribed norm for semilinear elliptic
  equations.
\newblock {\em Nonlinear Anal.}, 28(10):1633--1659, 1997.

\bibitem{KeelTao1998}
Markus Keel and Terence Tao.
\newblock Endpoint {S}trichartz estimates.
\newblock {\em Amer. J. Math.}, 120(5):955--980, 1998.

\bibitem{KenigMerle2006}
Carlos~E. Kenig and Frank Merle.
\newblock Global well-posedness, scattering and blow-up for the
  energy-critical, focusing, non-linear {S}chr\"{o}dinger equation in the
  radial case.
\newblock {\em Invent. Math.}, 166(3):645--675, 2006.

\bibitem{Lecoz2008}
Stefan Le~Coz.
\newblock A note on {B}erestycki-{C}azenave's classical instability result for
  nonlinear {S}chr\"{o}dinger equations.
\newblock {\em Adv. Nonlinear Stud.}, 8(3):455--463, 2008.

\bibitem{LewinRotaNodari2020}
Mathieu Lewin and Simona~Rota Nodari.
\newblock The double-power nonlinear {S}chr\"{o}dinger equation and its
  generalizations: uniqueness, non-degeneracy and applications.
\newblock {\em Calc. Var. Partial Differential Equations}, 59(6):Paper No. 197,
  49, 2020.

\bibitem{LIONS1984-1}
Pierre-Louis Lions.
\newblock The concentration-compactness principle in the calculus of
  variations. {T}he locally compact case. {I}.
\newblock {\em Ann. Inst. H. Poincar\'{e} Anal. Non Lin\'{e}aire},
  1(2):109--145, 1984.

\bibitem{LIONS1984-2}
Pierre-Louis Lions.
\newblock The concentration-compactness principle in the calculus of
  variations. {T}he locally compact case. {II}.
\newblock {\em Ann. Inst. H. Poincar\'{e} Anal. Non Lin\'{e}aire},
  1(4):223--283, 1984.

\bibitem{MiaoXuZhao2013}
Changxing Miao, Guixiang Xu, and Lifeng Zhao.
\newblock The dynamics of the 3{D} radial {NLS} with the combined terms.
\newblock {\em Comm. Math. Phys.}, 318(3):767--808, 2013.

\bibitem{MiaoZhaoZheng2017}
Changxing Miao, Tengfei Zhao, and Jiqiang Zheng.
\newblock On the 4{D} nonlinear {S}chr\"{o}dinger equation with combined terms
  under the energy threshold.
\newblock {\em Calc. Var. Partial Differential Equations}, 56(6):Paper No. 179,
  39, 2017.

\bibitem{Nirenberg1959}
Louis Nirenberg.
\newblock On elliptic partial differential equations.
\newblock {\em Ann. Scuola Norm. Sup. Pisa Cl. Sci. (3)}, 13:115--162, 1959.

\bibitem{NorisTavaresVerzini2019}
Benedetta Noris, Hugo Tavares, and Gianmaria Verzini.
\newblock Normalized solutions for nonlinear {S}chr\"{o}dinger systems on
  bounded domains.
\newblock {\em Nonlinearity}, 32(3):1044--1072, 2019.

\bibitem{Ozawa06}
Tohru Ozawa.
\newblock Remarks on proofs of conservation laws for nonlinear
  {S}chr\"{o}dinger equations.
\newblock {\em Calc. Var. Partial Differential Equations}, 25(3):403--408,
  2006.

\bibitem{Soave2020}
Nicola Soave.
\newblock Normalized ground states for the {NLS} equation with combined
  nonlinearities.
\newblock {\em J. Differential Equations}, 269(9):6941--6987, 2020.

\bibitem{Soave2020Sobolevcriticalcase}
Nicola Soave.
\newblock Normalized ground states for the {NLS} equation with combined
  nonlinearities: the {S}obolev critical case.
\newblock {\em J. Funct. Anal.}, 279(6):108610, 43, 2020.

\bibitem{TaoVisanZhang07}
Terence Tao, Monica Visan, and Xiaoyi Zhang.
\newblock The nonlinear {S}chr\"{o}dinger equation with combined power-type
  nonlinearities.
\newblock {\em Comm. Partial Differential Equations}, 32(7-9):1281--1343, 2007.

\end{thebibliography}
\vspace{0.25cm}

\end{document}